\newcommand{\sq}{\varepsilon}
\newcommand{\ssq}{\sqrt{\varepsilon}}
\newcommand{\sfq}{\sqrt[4]{\varepsilon}}
\newcommand{\invssq}{\varepsilon^{-\frac12}}
\newcommand{\uN}{\mathbbm{u}}
\newcommand{\pN}{\mathbbm{p}}
\newcommand{\qN}{\mathbbm{q}}
\newcommand{\wN}{\bm{\mathbbm{w}}}
\newcommand{\wph}{\bm{\mathbbm{W}}}
\newcommand{\uph}{\mathbbm{U}}
\newcommand{\pph}{\mathbbm{P}}
\newcommand{\qph}{\mathbbm{Q}}
\newcommand{\vph}{\bm{\mathbbm{z}}}
\newcommand{\vphu}{\mathbbm{v}}
\newcommand{\vphq}{\mathbbm{r}}
\newcommand{\vphp}{\mathbbm{s}}
\newcommand{\dual}[2]{\left\langle#1,#2\right\rangle}
\newcommand{\norm}   [1] {\left\Vert#1\right\Vert}
\newcommand{\enorm}   [1] {\interleave #1\interleave_{E}}
\newcommand{\ba}   [1] {\interleave #1\interleave_{B}}
\newcommand{\jump}   [1] {[\![#1]\!]}
\newcommand{\Bln}[2]{B(#1;#2)}
\newcommand{\Ton}[2]{\mathcal{T}_1(#1;#2)}
\newcommand{\Ttw}[2]{\mathcal{T}_2(#1;#2)}
\newcommand{\Tth}[2]{\mathcal{T}_3(#1;#2)}
\newcommand{\Tfr}[2]{\mathcal{T}_4(#1;#2)}
\newcommand{\mcs}{\Theta}
\newcommand{\spc}{\mathcal{V}_N}
\newcommand{\ppri}{\psi^\prime}
\newcommand{\ofh}{N/4}
\newcommand{\tfh}{3N/4}
\newcommand{\parep}{\varrho}
\newcommand{\sparep}{\sqrt{\varrho}}
\newcommand{\kij}   {K_{ij}}
\newcommand{\xl}   {\Omega_{XL}}
\newcommand{\xm}   {\Omega_{XM}}
\newcommand{\xr}   {\Omega_{XR}}
\newcommand{\dx}{\partial_x}
\newcommand{\dy}{\partial_y}
\renewcommand{\theequation} {\arabic{section}.\arabic{equation}}
\newtheorem  {proposition} {\hspace{15pt}Proposition}[section]
\newtheorem  {theorem}    {\hspace{15pt} Theorem}[section]
\newtheorem  {lemma}     {\hspace{15pt} Lemma}[section]
\title{\bf Local discontinuous Galerkin method
	on layer-adapted meshes for singularly perturbed reaction--diffusion problems in two dimensions
}
\author{
Yanjie Mei\footnotemark[1],\quad
Yao Cheng\footnotemark[2],\quad
Sulei Wang\footnotemark[2],
\quad Zhijie Xu\footnotemark[2]
}
\begin{document}

\maketitle

\renewcommand{\thefootnote}{\fnsymbol{footnote}}
\footnotetext[1]
{International Education School,
	Suzhou University of Science and Technology,
	215009 Suzhou, Jiangsu Province, P.~R.~China.
	E-mail: yjmei@post.usts.edu.cn.
}
\footnotetext[2]
{School of Mathematical Sciences,
Suzhou University of Science and Technology,
Suzhou 215009, Jiangsu Province, P.~R.~China.
E-mail: ycheng@usts.edu.cn (corresponding author),
slwang@post.usts.edu.cn, zjxu@post.usts.edu.cn.
}


\begin{center}
\small
\begin{minipage}{0.8\textwidth}
\textbf{Abstract.}
We analyse the local discontinuous Galerkin (LDG) method for two-dimensional singularly perturbed reaction--diffusion problems. A class of layer-adapted meshes, including Shishkin- and Bakhvalov-type meshes, is discussed within a general framework. Local projections and their approximation properties on anisotropic meshes are used to derive error estimates for energy and ``balanced"norms. Here, the energy norm is naturally derived from the bilinear form of LDG formulation and the ``balanced" norm is artifically introduced to capture the boundary layer contribution. We establish a uniform convergence of order $k$ for the LDG method using the balanced norm with the local weighted $L^2$ projection as well as an optimal convergence of order $k+1$ for the energy norm using the local Gauss--Radau projections. Numerical experiments are presented.
\medskip

\textbf{Keywords.}
local discontinuous Galerkin method,
singularly perturbed, 
reaction-diffusion,
layer-adapted meshes,
balanced-norm

\medskip

\textbf{AMS.}
65N30, 65N15,65N12

\end{minipage}
\end{center}

\section{Introduction}
\label{intro}
Over the past few decades,
singularly perturbed problems have attracted considerable attention in the scientific community. 
Such problems arise in many applications,
including the modelling of viscous fluid flows, semiconductor devices, and more.
For reaction-diffusion problems, difficulties arise owing to the presence of boundary layers in the solution. Unless the meshes are sufficiently refined, traditional finite-difference or finite-element methods on uniform or quasi-uniform meshes yield oscillatory and inaccurate numerical solutions. Consequently, three common approaches have been proposed in the literature. The first is to use traditional numerical methods on strongly refined, layer-adapted meshes, such as the Shishkin-type (S-type) or Bakhvalov-type (B-type) mesh \cite{Bakhvalov1969,Linss2001,Linss2003book,Shishkin1990}. Various parameter-uniform convergence results have been established in this way;
notably, the order of convergence and error constant are independent of
the singular perturbation parameters.
The second approach is to use a stabilised numerical method, such as the streamline diffusion finite-element method, interior-penalty discontinuous Galerkin method, or local discontinuous Galerkin (LDG) method \cite{Johnson1984,Cheng2015,Cheng2017}; well-behaved local error estimates have been investigated using uniform or quasi-uniform meshes. The third approach is to combine the aforementioned stabilised numerical methods with layer-adapted meshes. From a practical perspective, 
the third approach is preferable because it is more stable and less sensitive to the choice of transition point on the layer-adapted mesh.

The LDG method is a form of finite-element method; it was first proposed 
as a generalisation of the discontinuous Galerkin (DG) method for a 
convection--diffusion problem \cite{Cockburn:Shu:LDG}. Later, it was applied to solve the purely elliptic problem \cite{Cockburn2001} and other higher-order partial differential equations\cite{Xu2010}. Because the LDG method shares many advantages of the DG methods and can effectively simulate the acute change of a singular solution, it is particularly suited to solving singularly perturbed problems. For example, Cheng et al. performed  
double-optimal local error estimates for two explicit, fully discrete LDG methods on quasi-uniform meshes \cite{Cheng2015,Cheng2017}. Xie et al. established uniform convergence and super-convergence analyses of the LDG method on a standard Shishkin mesh \cite{Zhu:2dMC,Zhu:1dCMS,Xie2010MC}. However, few results have been established for the LDG method on general S-type or B-type meshes.

Recently, we analysed the LDG method on several S-type meshes and a B-type mesh for singularly perturbed convection--diffusion problems. Robust error estimates were derived from the energy norm \cite{Cheng2020}. 
However, the reaction--diffusion case remains unexplored.
Despite its simpler appearance, reaction-dominated diffusion without convection differs from convection--diffusion in the following three respects:

\begin{itemize}
	\item For singularly perturbed reaction--diffusion problems, the boundary layer structure is considerably more complicated. As a result, the regularity of the solution is complex. This adds many difficulties to the theoretical analysis, such as in the construction of layer-adapted meshes and the estimates of various approximation errors.
	\item If a purely alternating numerical flux is employed in the LDG method, we have no interior boundary jump term in the energy norm. 
	Therefore, it is possible to establish an optimal convergence of order $k+1$ for the LDG method in the energy norm. To highlight the influence of the singularly perturbed parameter, we perform a more elaborate analysis
	for the two-dimensional Gauss--Radau projections on anisotropic meshes.
	\item In the reaction--diffusion region, the energy norm is inadequate because it cannot reflect the contribution of the boundary layer component. A balanced norm was introduced in \cite{Lin2012} to address this problem. To date, balanced-norm error estimates are available for the Galerkin finite-element method (FEM) \cite{Roos2015,Roos2016}, mixed FEM \cite{Lin2012}, and hp-FEM  \cite{Melenk2018}, but not for the LDG method.  
	For the first time, we establish the uniform convergence of the LDG method for the balanced norm.
\end{itemize}

The remainder of this paper is organised as follows: in Section 2, we describe the LDG method; in Section 3, we introduce a class of layer-adapted meshes and state some elementary lemmas for them;
in Section 4, we establish uniform convergence for the balanced and energy norms; in Section 5, we present numerical experiments; and in Section 6, we mention a convergence result for the fully discrete LDG $\theta$-scheme for parabolic singularly perturbed problems.

\section{The LDG method}
\label{sec:scheme}

Consider a two-dimensional singularly perturbed reaction--diffusion problem, expressed as
\begin{subequations}\label{spp:R-D}
	\begin{align}
	-\varepsilon \Delta u + b(x,y) u
	&= f(x,y),   &\textrm{in}&\;  \Omega =(0,1)^2,
	\\
	u &= 0,   &\textrm{on}&\; \partial \Omega,
	\end{align}
\end{subequations}
where $0<\varepsilon \ll 1$ is a perturbation parameter, and
$b(x,y) \geq 2\beta^2>0$ for any $(x,y)\in \overline{\Omega}$ 
and for some positive constant $\beta$. 
In this section, we present the LDG method for \eqref{spp:R-D}.

Let $\Omega_N=\{K_{ij}\}_{i=1,2,\dots,N_x}^{j=1,2,\dots,N_y}$
be a rectangle partition of $\Omega$ with element $K_{ij}=I_i\times J_j$,
where $I_i=(x_{i-1},x_i)$ and $J_j=(y_{j-1},y_j)$.
We set $h_{x,i}=x_i-x_{i-1}$, $h_{y,j}=y_j-y_{j-1}$, and $\hbar=\min_{K_{ij}\in\Omega_N} \min\{h_{x,i},h_{y,j}\}$. We let
\begin{equation}
\spc= \{v\in L^2 (\Omega)\colon
v|_{K} \in \mathcal{Q}^{k} (K),  K\in\Omega_N \},
\end{equation}
be the discontinuous finite-element space,
where $\mathcal{Q}^k(K)$ represents the space of polynomials on $K$
with a maximum degree of $k$ in each variable.
$\spc$ is contained in a broken Sobolev space, expressed as 
\begin{equation}
\mathcal{H}^1(\Omega_N)=\{v\in L^2(\Omega):
v|_{K}\in H^1(K), \ K\in\Omega_N\},
\end{equation}
whose function is allowed to have discontinuities across element interfaces.
For $v\in \mathcal{H}^1(\Omega_N)$ and $y\in J_j$, $j=1,2,\dots,N_y$,
we use $v^\pm_{i,y}=\lim_{x\to x_{i}^\pm}v(x,y)$
and $v^\pm_{x,j}=\lim_{y\to y_{j}^\pm}v(x,y)$
to express the traces evaluated from the four directions.
We denote
\begin{align*}
\jump{v}_{i,y}&= v^{+}_{i,y}-v^{-}_{i,y},\;
\mathrm{for}\;i=1,2,\dots,N_x-1,\ \;\jump{v}_{0,y}=v^{+}_{0,y},
\;\jump{v}_{N_x,y}=-v^{-}_{N_x,y},
\\
\jump{v}_{x,j}&= v^{+}_{x,j}-v^{-}_{x,j},\;
\mathrm{for}\;j=1,2,\dots,N_y-1,\;\jump{v}_{x,0}=v^{+}_{x,0},
\;\jump{v}_{x,N_y}=-v^{-}_{x,N_y},
\end{align*}
as the jumps on the vertical and horizontal edges, respectively.

Rewrite \eqref{spp:R-D} into an equivalent first-order system:
\begin{align}\label{para:rewrite:2d}
-p_x-q_y+bu=f, \quad
\varepsilon^{-1}p=  u_x,            \quad
\varepsilon^{-1}q=  u_y,            \quad  \textrm{in} \ \Omega.
\end{align}
Let $\dual{ \cdot}{\cdot}_{D}$ be the inner product in $L^2(D)$.
Then, the LDG scheme is defined as follows. Find
$\wN=(\uN,\pN,\qN)\in \spc^3\equiv \spc\times \spc\times \spc$
such that in each element $K_{ij}$, the variational forms
\begin{subequations}\label{LDG:scheme:2d}
	\begin{alignat}{1}
	&\dual{\pN}{\vphu_x}_{K_{ij}}
	-\dual{\widehat{\pN}_{i,y}}{\vphu^{-}_{i,y}}_{J_j}
	+\dual{\widehat{\pN}_{i-1,y}}{\vphu^{+}_{i-1,y}}_{J_j}
	\nonumber\\
	&+\dual{\qN}{\vphu_y}_{K_{ij}}
	-\dual{\widehat{\qN}_{x,j}}{\vphu^{-}_{x,j}}_{I_i}
	+\dual{\widehat{\qN}_{x,j-1}}{\vphu^{+}_{x,j-1}}_{I_i}
	+\dual{b \uN}{\vphu}_{K_{ij}}
	=\dual{f}{\vphu}_{K_{ij}},
	\\
	&\varepsilon^{-1}\dual{\pN}{\vphp}_{K_{ij}}+
	\dual{ \uN}{\vphp_x}_{K_{ij}}
	-\dual{\widehat{\uN}_{i,y}}{\vphp^{-}_{i,y}}_{J_j}
	+\dual{\widehat{\uN}_{i-1,y}}{\vphp^{+}_{i-1,y}}_{J_j}
	=0,
	\\
	&\varepsilon^{-1}\dual{\qN}{\vphq}_{K_{ij}}+
	\dual{ \uN}{\vphq_y}_{K_{ij}}
	-\dual{\widehat{\uN}_{x,j}}{\vphq^{-}_{x,j}}_{I_i}
	+\dual{\widehat{\uN}_{x,j-1}}{\vphq^{+}_{x,j-1}}_{I_i}
	=0,
	\end{alignat}
\end{subequations}
hold for any $\vph=(\vphu,\vphp,\vphq)\in \spc^3$,
where the ``hat" terms are numerical fluxes defined by
\begin{subequations}\label{flux:diffusion}
	\begin{align}
	\label{flux:diffusion:q}
	\widehat{\pN}_{i,y}
	&\;
	=\begin{cases}
	\pN^{+}_{i,y} + \lambda_{i,y}\jump{\uN}_{i,y},             &i=0,1,\dots,N_x-1,\\
	\pN^-_{N_x,y} - \lambda_{N_x,y}\uN^-_{N_x,y}, &i=N_x,
	\end{cases}
	\\
	\label{flux:diffusion:u}
	\widehat{\uN}_{i,y}
	&\;
	=\begin{cases}
	0,             &\hspace{1.5cm}  i=0,N_x,\\
	\uN^{-}_{i,y}, &\hspace{1.5cm}  i=1,2,\dots,N_x-1,\\
	\end{cases}
	\end{align}
\end{subequations}
for $y\in J_j$ and $j=1,2,\dots,N_y$.
Here, $\lambda_{i,y} \geq 0  (i=0,1,\cdots,N_x)$ are stabilisation parameters to be determined
later.
Analogously, for $x\in I_i$ and $i=1,2,\dots,N_x$,
we can define $\widehat{\qN}_{x,j}$ and $\widehat{\uN}_{x,j}$
for $j=0,1,\dots,N_y$.

Write 
$\dual{w}{v}=\sum_{K_{ij}\in \Omega_N}\dual{w}{v}_{K_{ij}}$.
Then, we write the above LDG method into a compact form:
Find $\wN=(\uN,\pN,\qN)\in \spc^3$ such that
\begin{equation}\label{compact:form:2d}
\Bln{\wN}{\vph}=\dual{f}{\vphu},
\quad \forall \vph=(\vphu,\vphp,\vphq)\in \spc^3,
\end{equation}
where
\begin{align}
\label{B:def:2d}
\Bln{\wN}{\vph}=&\;
\mathcal{T}_1(\wN;\vph)+\mathcal{T}_2(\uN;\vph)
+\mathcal{T}_3(\wN;\vphu)+\mathcal{T}_4(\uN;\vphu),
\end{align}
with
\begin{align}
\Ton{\wN}{\vph}=&\;
\sq^{-1}\dual{\pN}{\vphp}+\sq^{-1}\dual{\qN}{\vphq}+\dual{b\uN}{\vphu},
\nonumber\\
\Ttw{\uN}{\vph}=&\;
\dual{ \uN}{\vphp_x}
+\sum_{j=1}^{N_y}\sum_{i=1}^{N_x-1}\dual{\uN^{-}_{i,y}}{\jump{\vphp}_{i,y}}_{J_j}
+\dual{ \uN}{\vphq_y}
+\sum_{i=1}^{N_x}\sum_{j=1}^{N_y-1}\dual{\uN^{-}_{x,j}}{\jump{\vphq}_{x,j}}_{I_i},
\nonumber\\
\Tth{\wN}{\vphu}=&\;
\dual{\pN}{\vphu_x}
+\sum_{j=1}^{N_y}\Big[
\sum_{i=0}^{N_x-1}\dual{\pN^{+}_{i,y}}{\jump{\vphu}_{i,y}}_{J_j}
-\dual{\pN^{-}_{N_x,y}}{\vphu^{-}_{N_x,y}}_{J_j}
\Big]
\nonumber\\
+&\dual{\qN}{\vphu_y}
+\sum_{i=1}^{N_x}\Big[
\sum_{j=0}^{N_y-1}\dual{\qN^{+}_{x,j}}{\jump{\vphu}_{x,j}}_{I_i}
-\dual{\qN^{-}_{x,N_y}}{\vphu^{-}_{x,N_y}}_{I_i}
\Big],
\nonumber\\
\Tfr{\uN}{\vphu}=&\;
\sum_{j=1}^{N_y}
\sum_{i=0}^{N_x}
\dual{\lambda_{i,y}\jump{\uN}_{i,y}}{\jump{\vphu}_{i,y}}_{J_j}
+\sum_{i=1}^{N_x}
\sum_{j=0}^{N_y}
\dual{\lambda_{x,j}\jump{\uN}_{x,j}}{\jump{\vphu}_{x,j}}_{I_i}.
\nonumber
\end{align}

\section{Layer-adapted meshes}
\label{sec:meshes}

To introduce the layer-adapted meshes,
we extract some precise information from the exact 
solution of \eqref{spp:R-D} and its derivatives \cite{Han1990,Clavero2005}.
\begin{proposition}\label{thm:reg:2d}
	Assume that the solution $u$ of \eqref{spp:R-D} can be decomposed by
	\begin{equation}
	u = S+ \sum_{k=1}^4 W_{k}+\sum_{k=1}^4 Z_{k}, 
	\quad (x,y)\in \overline{\Omega},
	\end{equation}
	where S is a smooth part, $W_k$ is a boundary layer part
	and $Z_k$ is a corner layer part.
	More precisely, for $0\leq i,j\le k+2$, there exists a constant $C$
	independent of $\varepsilon$ such that
	\begin{align}\label{reg:u:2d}
	\left| \dx^{i}\dy^{j} S\right| \leq C, \quad
	\left| \dx^{i}\dy^{j}W_{1}\right|
	\leq  C\sq^{-\frac{i}{2}} e^{-\frac{\beta x}{\ssq}}, \quad
	\left| \dx^{i}\dy^{j}Z_{1}\right|
	\leq  C\sq^{-\frac{i+j}{2}}e^{-\frac{\beta(x+y)}{\ssq}},
	\end{align}
	and so on for the remaining terms.
	Here, $\dx^i\dy^j v = \frac{\partial^{i+j} v}{\partial x^i \partial y^j}$.
\end{proposition}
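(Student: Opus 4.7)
The proposition is a classical regularity/decomposition result for two--dimensional singularly perturbed reaction--diffusion problems; the strategy of proof is due to Han--Kellogg and is detailed in \cite{Han1990,Clavero2005}. The plan is to follow their asymptotic-expansion framework. First I would set up an ansatz
\begin{equation*}
u \sim \sum_{\ell=0}^{M} \sq^{\ell} S_\ell
+ \sum_{k=1}^4 \sum_{\ell=0}^{M} \sq^{\ell/2} W_k^{(\ell)}
+ \sum_{k=1}^4 \sum_{\ell=0}^{M} \sq^{\ell/2} Z_k^{(\ell)},
\end{equation*}
where $M$ is chosen larger than the differentiation order $k+2$. The smooth coefficients $S_\ell$ are defined recursively by the reduced equations $b\,S_0 = f$ and $b\,S_{\ell+1} = \Delta S_\ell$; each $S_\ell$ is smooth with derivatives bounded independently of $\sq$, and the truncated sum serves as the regular part $S$.

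Next I would construct each boundary-layer corrector $W_k$. For the layer at $x=0$, introducing the stretched variable $\xi = x/\ssq$ and writing $W_1 = e^{-\beta \xi}\sum_\ell \sq^{\ell/2}\phi_\ell(\xi,y)$, the coefficients $\phi_\ell$ are chosen so that (i) the trace $W_1(0,y)$ cancels $S(0,y)$ order by order, and (ii) $-\sq\,\dx^2 W_1 + b(0,y)\,W_1 = 0$ to leading order. Each $\dx$ lowers a factor $\sq^{-1/2}$ while $\dy$ only hits smooth functions of $y$, which yields the stated bound $|\dx^i\dy^j W_1|\le C\sq^{-i/2} e^{-\beta x/\ssq}$. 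The other three layer functions $W_k$ are obtained identically on the remaining edges.

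The corner correctors $Z_k$ then kill the residual traces that two neighbouring $W$'s leave on their shared edge. For $Z_1$ at $(0,0)$ I would use the doubly stretched variables $(\xi,\eta)=(x/\ssq,\,y/\ssq)$ and solve, at each order, a decaying problem of the form $-(\partial_\xi^2+\partial_\eta^2)Z + b(0,0)\,Z = 0$ with boundary data matching the $W_k$-traces. Since differentiation in either variable contributes a factor $\sq^{-1/2}$ and the solution decays as $e^{-\beta(\xi+\eta)}$, one obtains $|\dx^i\dy^j Z_1|\le C\sq^{-(i+j)/2} e^{-\beta(x+y)/\ssq}$.

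Finally I would control the remainder $R = u - S - \sum_k W_k - \sum_k Z_k$. By construction $R$ satisfies the original PDE with zero boundary data and a right-hand side of size $O(\sq^{M/2})$; a comparison argument with an $\sq$-independent barrier gives an $L^\infty$ bound, and standard interior Schauder estimates on $\sq$-rescaled subdomains upgrade this to $|\dx^i\dy^j R|\le C\sq^{(M-i-j)/2}$, which for $M$ large enough is absorbed into $S$. The main obstacle is corner compatibility: making each $S_\ell$, $W_k^{(\ell)}$, $Z_k^{(\ell)}$ smooth up to $\partial\Omega$ requires compatibility conditions on $f$ and $b$ (and their cross-derivatives) at the four corners of $\Omega$, and the bookkeeping needed to match adjacent $W_k$'s with the corresponding $Z_k$ at every order is the most delicate technical step.
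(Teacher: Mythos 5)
The paper offers no proof of this proposition: it is stated as an \emph{assumption} on the solution, with the decomposition and derivative bounds taken from Han--Kellogg \cite{Han1990} and Clavero et al.\ \cite{Clavero2005}. Your sketch reproduces the standard asymptotic-expansion construction of those references (reduced equations for $S$, stretched variables for the $W_k$, doubly stretched variables for the $Z_k$, barrier-function control of the remainder), so it is consistent with what the paper relies on; indeed the corner-compatibility conditions you flag at the end are precisely why the paper phrases the result as an assumption rather than a theorem. One small imprecision: the leading-order layer equation gives decay $e^{-\sqrt{b(0,y)}\,x/\ssq}$ rather than exactly $e^{-\beta x/\ssq}$, and the stated bound follows only because $b\geq 2\beta^2$ makes $\sqrt{b(0,y)}\geq\sqrt{2}\,\beta>\beta$ uniformly.
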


The layer-adapted mesh is constructed as follows:
For notational simplification,
we assume that $N_x=N_y=N$.
Let $N\geq 4$ be a multiple of four. We introduce the mesh points
\[
0=x_0<x_1<\cdots<x_{N-1}<x_{N}=1,\quad
0=y_0<y_1<\cdots<y_{N-1}<y_{N}=1,
\]
and consider a tensor-product mesh with mesh points $(x_i,y_j)$.
Because both meshes have the same structure, we only describe the mesh in the $x$-direction.

Suppose $\varphi$ is a function defined in $[0,1/4]$ with 
\begin{align} \label{phi:assumption}
\varphi(0)=0,\varphi^{\prime}>0, \varphi^{\prime\prime}\geq0.
\end{align}
We define the transition parameter
\begin{equation}\label{condition:parameters}
\tau = \min\Big\{\frac14, \frac{\sigma\ssq}{\beta}\varphi(1/4)\Big\},
\end{equation}
where $\sigma>0$ is determined later.
Assume that $\sqrt{\varepsilon}\leq N^{-1}$
means that we are in the singularly perturbed case.
Moreover, $\varepsilon$ is sufficiently small that
\eqref{condition:parameters} is replaced by 
$\tau =\frac{\sigma\ssq}{\beta}\varphi(1/4)$.

The mesh in the $x$-direction is equidistant on $[\tau,1-\tau]$ with $N/2$ elements, but it is gradually divided on $[0,\tau]$ and $[1-\tau,1]$ with $N/4$ elements.
Hence, we set the mesh points as
\begin{equation}\label{layer-adapted}
x_i=
\begin{dcases}
\frac{\sigma\ssq}{\beta}\varphi(i/N),
&\textrm{for} \; i = 0,1,\cdots,N/4,
\\
\tau+2(1-2\tau)(i/N-1/4),
&\textrm{for}\; i = N/4+1,\cdots,3N/4-1,
\\
1-\frac{\sigma\ssq}{\beta}\varphi(1-i/N),
& \textrm{for}\; i = 3N/4,\cdots,N.
\end{dcases}
\end{equation}

In Table \ref{table:functions}, we list three typical layer-adapted meshes \cite{Linss2003book}: 
Shishkin (S-mesh), Bakhvalov--Shishkin (BS mesh)
and Bakhvalov-type (B-type mesh), together with $\psi=e^{-\varphi}$ and the important quantity $\max|\psi^{\prime}|$,
which arises in error estimates.
Figure \ref{fig:division} illustrates the divisions of $\Omega$ 
and the generated meshes
for $\varepsilon=10^{-2}$ and $N=16$.

Note that for these meshes and under the previous assumption, we always have
$\tau \geq \frac{\sigma\ssq}{\beta}\ln N$.

\begin{table}[ht]
	\caption{Layer-adapted meshes}
	\label{table:functions}
	\footnotesize
	\centering
	\begin{tabular}{cccc}
		\toprule
		& S-mesh
		& BS-mesh
		& B-type mesh
		\\
		\midrule
		$\varphi(t)$             & $4t\ln N$    & $-\ln\big[1-4(1- N^{-1})t\big]$   &$-\ln\big[1-4(1-\ssq)t\big]$ \\
		$\psi(t)$ & $N^{-4t}$    & $1-4(1-N^{-1})t$   &$1-4(1-\ssq)t$  \\
		$\max|\ppri|$ & $C\ln N$       & $C$ & $C$ \\
		\bottomrule
	\end{tabular}
\end{table}

\begin{figure}  
	\centering
	\includegraphics[width=0.4\linewidth]{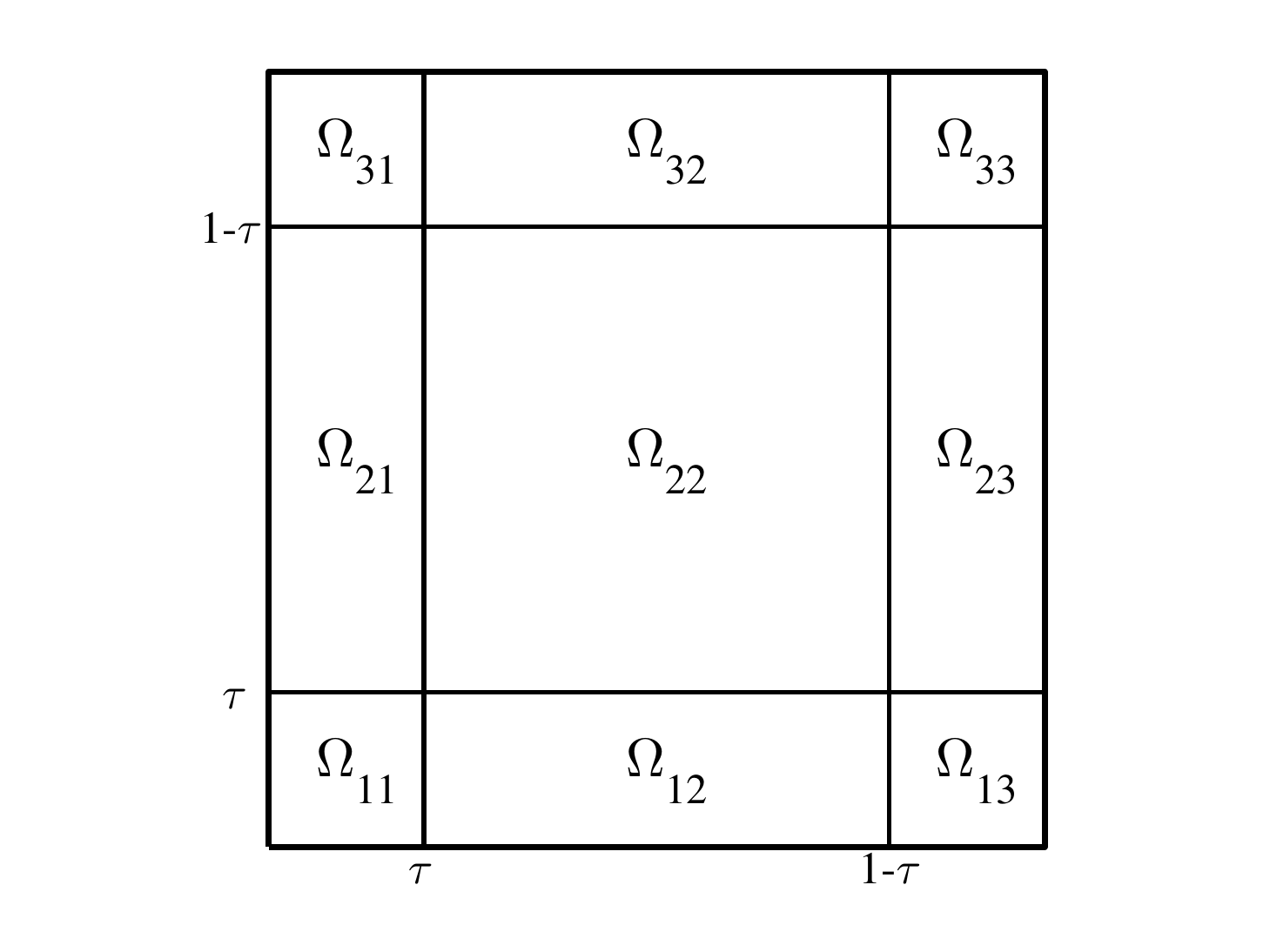}
	\includegraphics[width=0.4\linewidth]{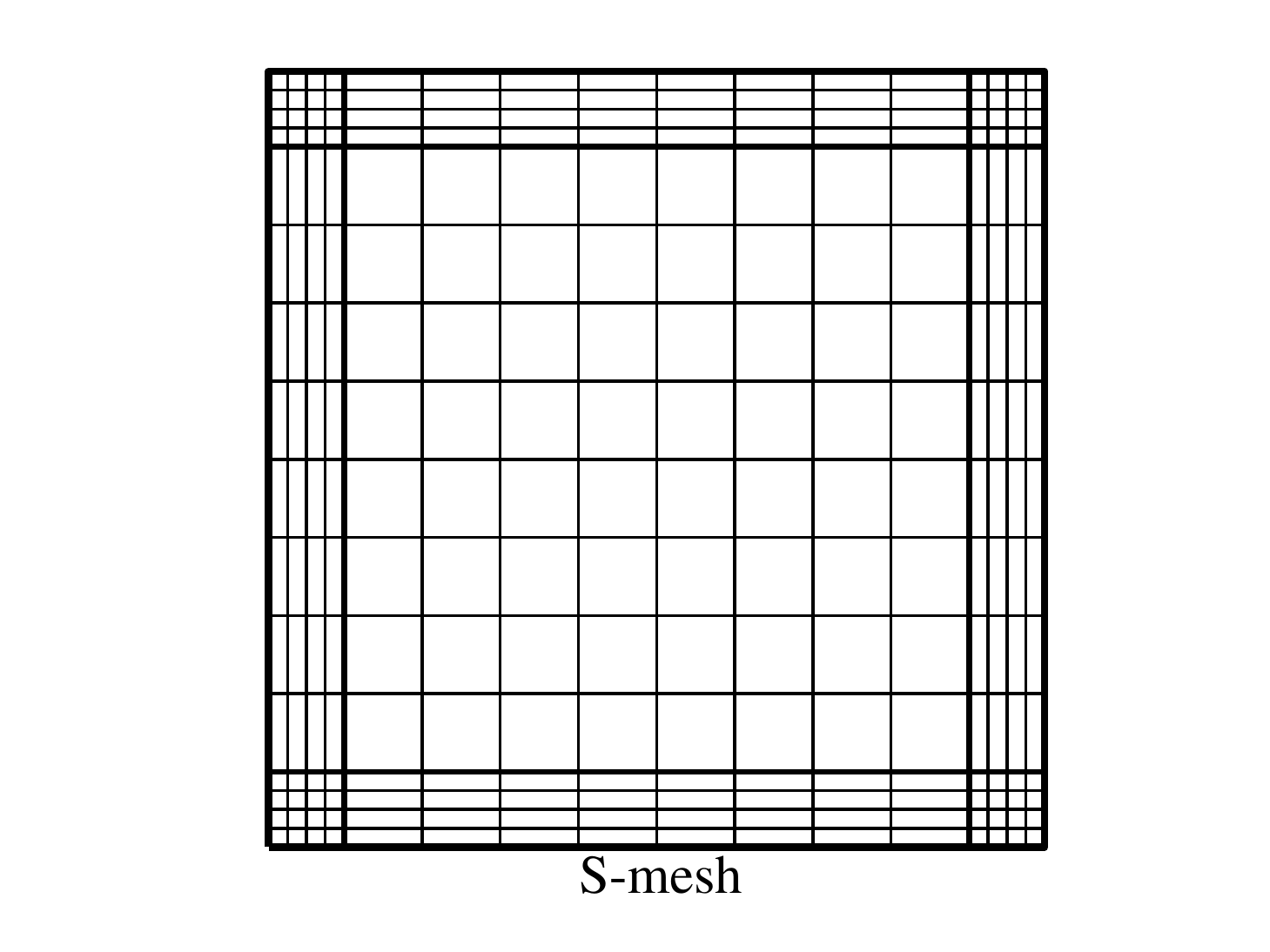}
	\\
	\includegraphics[width=0.4\linewidth]{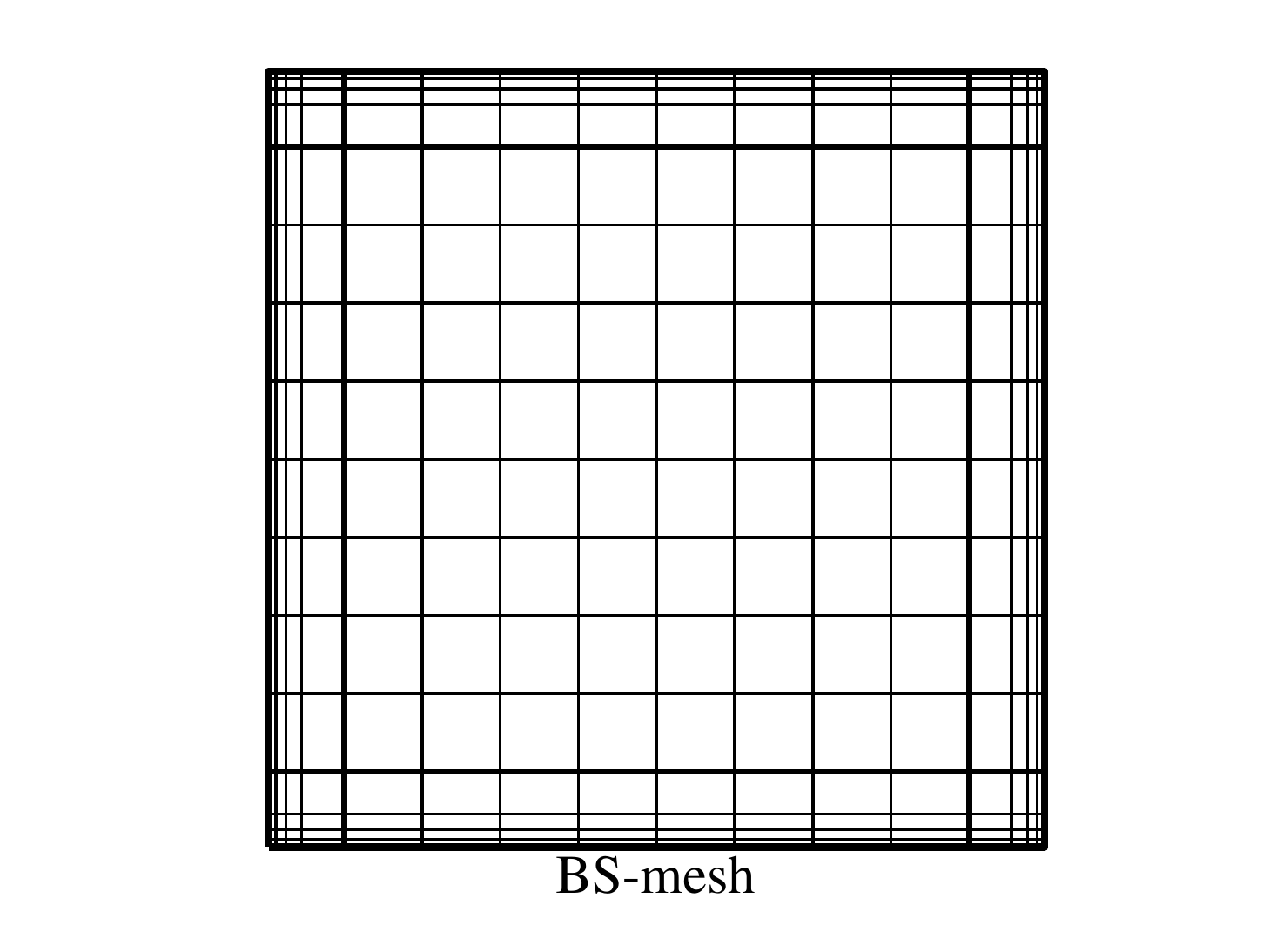}
	\includegraphics[width=0.4\linewidth]{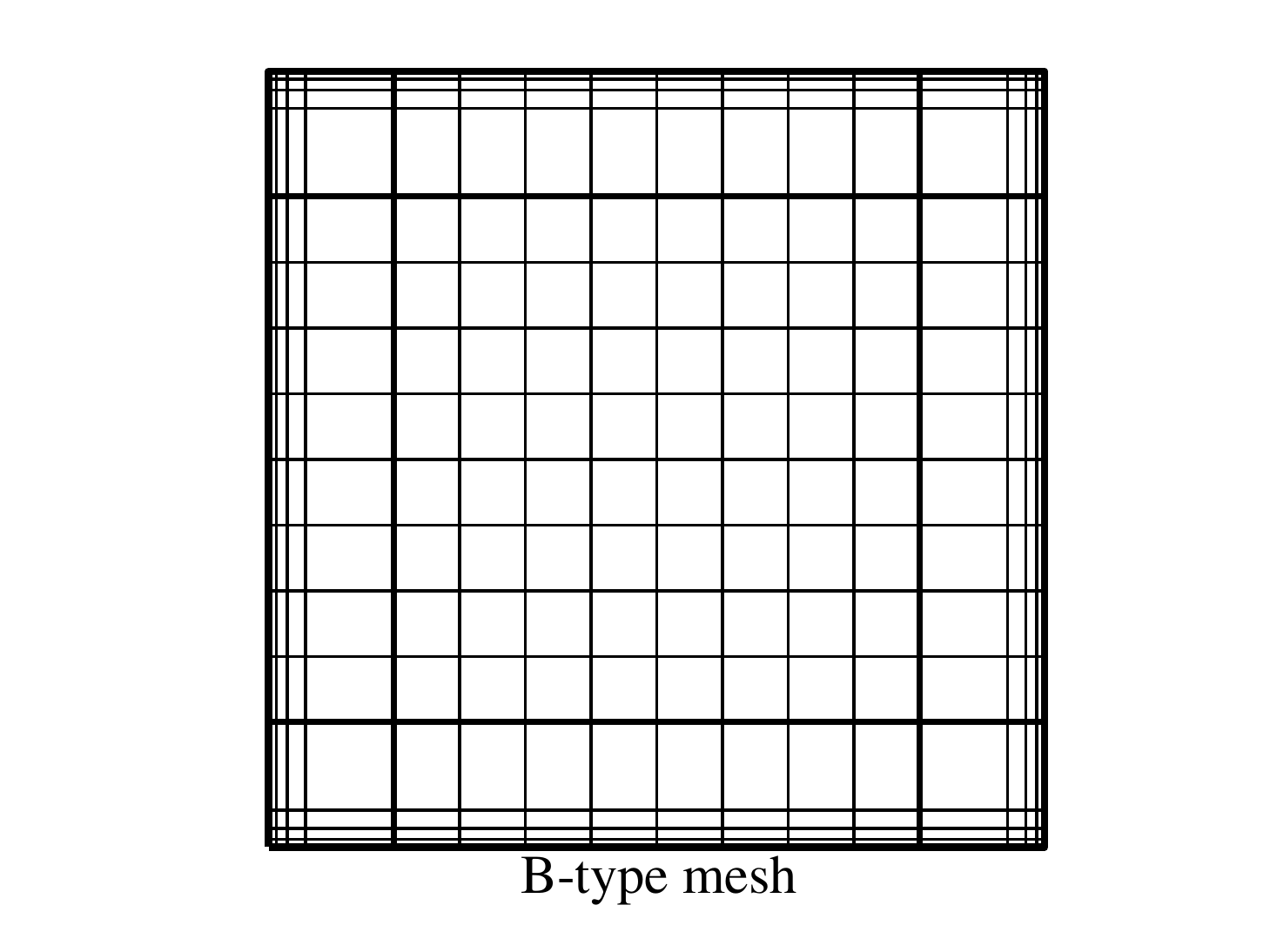}
	\caption{The division of $\Omega$}
	\label{fig:division}
\end{figure}

In the following,
we state two preliminary lemmas
that will be frequently employed in the subsequent analysis.
Because $h_{x,i}=h_{y,i}$, $i=1,2,\dots,N$,
we simply use $h_i$ to denote one of them.

\begin{lemma} \label{lemma:1}
	Suppose that
	\[
	\mcs_i =
	\min\Big\{\frac{h_i}{\ssq},1\Big\}
	e^{-\frac{\beta x_{i-1}}{\sigma\ssq}},\quad i = 1,2,\cdots,\ofh.
	\]
	Then, there exists a constant $C>0$ independent of $\sq$ and $N$ such that
	\begin{subequations} \label{bound:key}
		\begin{align} 
		\label{bound:theta:max}
		\max_{1\leq i\leq \ofh}\mcs_i
		\leq&\; CN^{-1}\max|\ppri|,
		\\
		\label{bound:theta:sum}
		\sum_{i=1}^{\ofh}\mcs_i \leq&\; C.
		\end{align}
	\end{subequations}
	
\end{lemma}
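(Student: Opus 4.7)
My plan is to reduce both bounds to a single telescoping estimate
\[
\mcs_i \leq C\bigl(\psi((i-1)/N) - \psi(i/N)\bigr),\qquad i=1,2,\ldots,\ofh,
\]
with $C$ independent of $\sq$ and $N$. Once this is available, \eqref{bound:theta:sum} follows immediately by telescoping together with $0\leq \psi(1/4)\leq\psi(0)=1$, while \eqref{bound:theta:max} follows from the mean value theorem applied to $\psi$ on $[(i-1)/N,i/N]$, which yields $\psi((i-1)/N)-\psi(i/N) = |\ppri(\xi_i)|/N \leq N^{-1}\max|\ppri|$.

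To establish the key estimate I would first rewrite the two factors of $\mcs_i$ via the mesh definition,
\[
e^{-\beta x_{i-1}/(\sigma\ssq)} = e^{-\varphi((i-1)/N)} = \psi((i-1)/N),\qquad \frac{h_i}{\ssq} = \frac{\sigma}{\beta}\bigl[\varphi(i/N)-\varphi((i-1)/N)\bigr],
\]
and then split according to which argument of the $\min$ is active. In the fine-mesh case $h_i/\ssq<1$, the $\varphi$-increment is bounded by $\beta/\sigma$, so by monotonicity of $\varphi$ one has $\psi((i-1)/N) \leq e^{\beta/\sigma}\psi(s)$ for every $s\in[(i-1)/N,i/N]$. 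Using the elementary identity $\varphi^\prime(s)\psi(s) = |\ppri(s)|$ (a consequence of $\psi=e^{-\varphi}$) then gives
\[
\mcs_i = \frac{\sigma}{\beta}\int_{(i-1)/N}^{i/N}\varphi^\prime(s)\,\psi((i-1)/N)\,ds \leq \frac{\sigma e^{\beta/\sigma}}{\beta}\int_{(i-1)/N}^{i/N}|\ppri(s)|\,ds = C\bigl(\psi((i-1)/N)-\psi(i/N)\bigr).
\]

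In the coarse-mesh case $h_i/\ssq\geq 1$ one has $\mcs_i = \psi((i-1)/N)$ and $\varphi(i/N)-\varphi((i-1)/N)\geq \beta/\sigma$, so $\psi(i/N)\leq e^{-\beta/\sigma}\psi((i-1)/N)$ and therefore $\psi((i-1)/N)-\psi(i/N) \geq (1-e^{-\beta/\sigma})\psi((i-1)/N)$, giving the same telescoping bound with constant $C=(1-e^{-\beta/\sigma})^{-1}$. The main obstacle I expect is finding a single clean estimate that is uniform across both regimes; the key observation is that the multiplicative gap $\psi((i-1)/N)/\psi(s)$ is controllable by $e^{\beta/\sigma}$ precisely when $h_i/\ssq<1$, while in the opposite regime one already has a strict geometric drop of $\psi$ across one cell. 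In either case $\psi((i-1)/N)-\psi(i/N)$ absorbs $\mcs_i$ up to a constant depending only on $\beta$ and $\sigma$, and the two requested bounds \eqref{bound:theta:max}--\eqref{bound:theta:sum} drop out as described.
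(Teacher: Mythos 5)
Your proof is correct. The paper itself gives no argument for this lemma beyond a citation to \cite{Cheng2020}, so there is nothing to diverge from; your telescoping estimate $\mcs_i\leq C\bigl(\psi((i-1)/N)-\psi(i/N)\bigr)$, obtained by splitting on which branch of the $\min$ is active and using $\varphi^\prime\psi=|\ppri|$ together with $\varphi(0)=0$ (so $\psi(0)=1$), is precisely the standard argument for such bounds on S- and B-type meshes, and both \eqref{bound:theta:max} and \eqref{bound:theta:sum} follow from it exactly as you describe, with a constant depending only on the fixed quantities $\beta$ and $\sigma$.
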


\begin{proof}
	See \cite{Cheng2020} for details.
\end{proof}

\begin{lemma} \label{lem:h:min:max}
	We have $h_{N/4+1} = h_{N/4+2} = \cdots = h_{3N/4}$ 
	and $\hbar \geq C\ssq N^{-1}\max|\ppri|$.
	For the S-type meshes,
	\begin{align}
	\label{mesh:property:S}
	\begin{cases} 
	1 \geq h_i/h_{i+1} \geq C
	& 
	i = 1,2,\cdots,\ofh-1,
	\\
	1 \geq h_{i+1}/h_i \geq C
	& 
	i = \tfh+1,\tfh+2,\cdots,N-1.
	\end{cases}
	\end{align}
	For the B-type mesh,
	\begin{align}
	\label{mesh:property:B}
	\begin{cases}
	1 \geq h_i/h_{i+1} \geq C
	& 
	i = 1,2,\cdots,\ofh-2,
	\\
	1 \geq h_{i+1}/h_i \geq C
	& 
	i = \tfh+2,\tfh+3,\cdots,N-1.
	\end{cases}
	\end{align}
	Moreover, 
	\begin{equation} \label{def:qstar}
	h_{ly} \equiv \max_{\substack{i=1,\cdots,N/4 \\i=3N/4,\cdots,N}} h_i 
	\leq C \parep
	\equiv C
	\begin{cases}
	\ssq, & \textrm{for \; S-type meshes},\\
	N^{-1}, & \textrm{for \; a\; B-type mesh}.
	\end{cases}
	\end{equation}
\end{lemma}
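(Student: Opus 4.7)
The plan is to verify the five assertions by direct case-by-case computation using the explicit mesh rule \eqref{layer-adapted} together with the three functions $\varphi$ listed in Table \ref{table:functions}. The uniformity $h_{N/4+1} = \cdots = h_{3N/4}$ falls straight out of the middle-segment formula: $x_{N/4}=\tau$ and $x_{3N/4}=1-\tau$ from the first and third branches of \eqref{layer-adapted} sandwich the middle branch and force $h_i = 2(1-2\tau)/N$ on that index range.

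For the lower bound $\hbar \geq C\ssq N^{-1}\max|\ppri|$, I would treat the layer and middle zones separately. On the layer $1 \leq i \leq \ofh$ the mean value theorem gives $h_i = (\sigma\ssq/(\beta N))\varphi'(\xi_i)$ for some $\xi_i \in ((i-1)/N, i/N)$, and convexity $\varphi'' \geq 0$ from \eqref{phi:assumption} makes $h_1$ the smallest, with $h_1 \geq (\sigma\ssq/(\beta N))\varphi'(0)$. Direct inspection of each row of Table \ref{table:functions} gives $\varphi'(0) = \max|\ppri|$. On the middle zone $h_i = 2(1-2\tau)/N \geq N^{-1}$ since $\tau \leq 1/4$, and under $\ssq \leq N^{-1}$ one has $\ssq\max|\ppri| \leq C$ (using $N^{-1}\ln N \leq Ce^{-1}$ for the S-mesh), so $N^{-1}$ dominates the target bound.

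For the ratio estimates \eqref{mesh:property:S} and \eqref{mesh:property:B} and the size bound \eqref{def:qstar}, the S-mesh is trivial because $\varphi$ is linear on $[0,1/4]$: $h_i$ is constant on the layer, and $h_{ly} \leq C\ssq$ follows from $\ln N / N \leq C$. For the BS- and B-type meshes the common form $\varphi'(t) = 4(1-\alpha)/(1 - 4(1-\alpha)t)$, with $\alpha = N^{-1}$ or $\alpha = \ssq$ respectively, converts the ratio via MVT into $h_{i+1}/h_i = 1 + 4(1-\alpha)(\xi_{i+1}-\xi_i)/(1 - 4(1-\alpha)\xi_{i+1})$. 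With $\xi_{i+1} - \xi_i \leq 2/N$, the key step is a lower bound on the denominator: for BS with $\alpha = N^{-1}$ the denominator is $\geq N^{-1}$ and the ratio is $\leq 9$, valid up to $i \leq \ofh - 1$; for B-type with $\alpha = \ssq \leq N^{-1}$ the denominator collapses near $t=1/4$, so one must restrict to $i \leq \ofh - 2$, where $\xi_{i+1} \leq (\ofh - 1)/N$ keeps the denominator $\geq 4(1-\ssq)/N$ and the ratio $\leq 3$. For \eqref{def:qstar}, $h_{ly}=h_{\ofh}$ by monotonicity: for B-type, $h_{\ofh} = (\sigma\ssq/\beta)\ln(1 + 4(1-\ssq)/(N\ssq))$ and $\ln(1+x)\leq x$ yields $h_{\ofh}\leq 4\sigma(1-\ssq)/(\beta N)\leq C N^{-1}$; for BS the log argument is uniformly bounded so $h_{\ofh}\leq C\ssq$ is direct.

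The main obstacle I expect is the B-type ratio estimate: cleanly pinpointing why the denominator $1 - 4(1-\ssq)\xi_{i+1}$ stays bounded away from zero on $i \leq \ofh - 2$ but may collapse to size $O(\ssq)$ at $i = \ofh - 1$, and extracting a ratio constant that is uniform in both $\ssq$ and $N$ throughout the singularly-perturbed regime $\ssq \leq N^{-1}$.
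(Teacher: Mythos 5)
Your proposal is correct and follows essentially the same route as the paper: mean-value theorem plus convexity of $\varphi$ on the layer, the identity $\min|\varphi'|=\varphi'(0)=\max|\psi'|$ for $\hbar$, and the bound $h_{N/4}\leq C\min\{\ssq N^{-1}\varphi'(1/4),N^{-1}\}$ for $h_{ly}$. The only difference is that you actually carry out the ratio estimates \eqref{mesh:property:S}--\eqref{mesh:property:B} (correctly isolating why the B-type mesh loses the index $i=N/4-1$), whereas the paper simply defers these to \cite{Cheng2020}.
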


\begin{proof}
	It can be seen that $h_{N/4+1}=h_{N/4+2}=\dots=h_{3N/4}$.
	\eqref{mesh:property:S} and \eqref{mesh:property:B} can be verified 
	trivially (see \cite{Cheng2020}).
	By \eqref{layer-adapted}, \eqref{phi:assumption}, and $\psi=e^{-\varphi}$, we have
	\begin{align*}
	\hbar \geq&\; C\ssq N^{-1}\min|\varphi^{\prime}|
	=C\ssq N^{-1}|\varphi^{\prime}(0)|
	=C\ssq N^{-1}|\psi^{\prime}(0)|
	=C\ssq N^{-1}\max|\psi^{\prime}|.
	\end{align*}
	Now, we prove \eqref{def:qstar}. Combining the fact that $h_{ly} \leq CN^{-1}$ with the inequality
	\begin{align*}
	h_{ly} 
	\leq C \ssq N^{-1} \max|\varphi^{\prime}|
	\leq  C\ssq N^{-1} |\varphi^{\prime}(1/4)|,
	\end{align*}
	we obtain 
	\begin{align}
	h_{ly} \leq C\min\Big\{\ssq N^{-1} |\varphi^{\prime}(1/4)|,N^{-1}\Big\}, 
	\end{align}
	which implies \eqref{def:qstar}.
\end{proof}

\section{Convergence analysis}
\label{sec:convergence}

In this section, we perform
uniform convergence analysis
for the LDG method
on layer-adapted meshes.
Two related norms are considered.

The first is the energy norm, which is naturally derived from
the formulation of the LDG method; that is, 
$\enorm{\wN}^2\equiv \Bln{\wN}{\wN}$. Hence, we obtain
\begin{align}
\label{energy:norm:2d}
\enorm{\wN}^2  &\;= 
\sq^{-1}\norm{\pN}^2+\sq^{-1}\norm{\qN}^2
+\norm{b^{1/2}\uN}^2
\nonumber\\
&\;+
\sum_{j=1}^{N}\sum_{i=0}^{N}
\dual{\lambda_{i,y}}{\jump{\uN}^2_{i,y}}_{J_j}
+\sum_{i=1}^{N}\sum_{j=0}^{N}
\dual{\lambda_{x,j}}{\jump{\uN}^2_{x,j}}_{I_i},
\end{align}
by using integration by parts and some trivial manipulations.
Here,\\
$\norm{z}^2 = \sum_{K\in\Omega_{N}} \norm{z}^2_{K}$ and $\norm{z}^2_{K} = \dual{z}{z}_{K}$.

However,  
this norm is inadequate for reaction--diffusion problems
because the layer contributions are not ``seen" In fact,
letting $u=e^{-\beta (x+y)/\ssq}$, we have
$\enorm{(u,\sq u_x,\sq u_y)}=\mathcal{O}(\sfq)$,
which vanishes as $\varepsilon\rightarrow 0$.
Thus, the following ``balanced" norm is introduced:
\begin{align}
\label{balanced:norm:2d}
\ba{\wN}^2 &\;=\;
\sq^{-3/2} \norm{\pN}^2+\sq^{-3/2}\norm{\qN}^2
+\norm{b^{1/2}\uN}^2
\nonumber\\
&\;+
\sum_{j=1}^{N}\sum_{i=0}^{N}
\dual{1}{\jump{\uN}^2_{i,y}}_{J_j}
+\sum_{i=1}^{N}\sum_{j=0}^{N}
\dual{1}{\jump{\uN}^2_{x,j}}_{I_i}.
\end{align}

In the following subsections, 
we perform convergence analysis for these two norms. 
Different projections are introduced, 
and the related approximation properties 
are investigated.

\subsection{Convergence of balanced norm}
First, we analyse the LDG method for the balanced norm \eqref{balanced:norm:2d}.
Let $\omega\in C^1(\Omega_N)$ and
$\omega\geq\omega_0>0$
be a general weight function.
We define the piecewise local weight $L^2$ projection $\Pi_\omega$ as follows:
For each element $K\in \Omega_N$ and for any $z\in
L^2(\Omega_N)$, $\Pi_{\omega} z \in V_N$ satisfies
\begin{align} \label{def:prj:WL2}
\dual{\omega\Pi_{\omega} z}{\vphu}_{K}
= \dual{\omega z}{\vphu}_{K},
\quad \forall \vphu\in \mathcal{Q}^{k}(K).
\end{align}   
In the special case of $\omega=1$, 
this operator reduces to the classical
local $L^2$ projection, which is denoted by $\Pi$.

\begin{lemma}\cite{Apel1999}
	There exists a constant $C>0$, independent of the element size and $z$, such that
	\begin{subequations}\label{2L2:stb:app}
		\begin{align}
		\label{2L2:stb}
		\norm{\Pi_{\omega} z}_{L^m(\kij)}\leq &\; C
		\norm{z}_{L^m(\kij)},
		\\
		\label{2L2:app}
		\norm{z-\Pi_{\omega} z}_{L^m(\kij)}\leq &\;
		C \Big[h_{i}^{k+1}\norm{\partial_x^{k+1}z}_{L^m(\kij)}
		+h_{j}^{k+1}\norm{\partial_y^{k+1}z}_{L^m(\kij)}
		\Big],
		\end{align}
	\end{subequations}
	where $m \in \{2,\infty\}$.
\end{lemma}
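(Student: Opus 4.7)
The plan is to follow the standard reference-element argument and reduce both bounds to a scaling step combined with the anisotropic approximation theory of Apel. First I would pull the problem back to the reference square $\hat K=(0,1)^2$ through the affine map $F_K:\hat K\to\kij$, $F_K(\hat x,\hat y)=(x_{i-1}+h_i\hat x,y_{j-1}+h_j\hat y)$, and set $\hat\omega=\omega\circ F_K$, $\hat z=z\circ F_K$. Since the Jacobian $h_ih_j$ is constant, a direct change of variables in \eqref{def:prj:WL2} shows $\widehat{\Pi_\omega z}=\Pi_{\hat\omega}\hat z$, the weighted $L^2$ projection on $\hat K$ with weight $\hat\omega$. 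Because $\omega\in C^1(\Omega_N)$ with $\omega_0\leq\omega\leq\|\omega\|_{L^\infty(\Omega)}$, the pulled-back weight $\hat\omega$ stays uniformly bounded above and below independently of $\kij$. On the finite-dimensional space $\mathcal{Q}^k(\hat K)$ all norms are equivalent and $\Pi_{\hat\omega}$ depends continuously on $\hat\omega$, so $\norm{\Pi_{\hat\omega}\hat z}_{L^m(\hat K)}\leq C\norm{\hat z}_{L^m(\hat K)}$. Scaling back (the Jacobian cancels for $m=2$ and is absent for $m=\infty$) gives the stability estimate \eqref{2L2:stb}.

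For the approximation estimate I would use that $\Pi_\omega$ reproduces polynomials of $\mathcal{Q}^k(\kij)$: for any $q\in\mathcal{Q}^k(\kij)$,
\[
\norm{z-\Pi_\omega z}_{L^m(\kij)}\leq \norm{z-q}_{L^m(\kij)}+\norm{\Pi_\omega(q-z)}_{L^m(\kij)}\leq (1+C)\norm{z-q}_{L^m(\kij)},
\]
by the stability just established. Thus the task reduces to exhibiting a single polynomial $q\in\mathcal{Q}^k(\kij)$ that realises the directional anisotropic bound in \eqref{2L2:app}.

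Taking $q=(\pi_x^k\otimes\pi_y^k)z$, the tensor product of the one-dimensional $L^2$ projections in each coordinate direction (which by construction lies in $\mathcal{Q}^k(\kij)$), and applying a one-dimensional Bramble--Hilbert argument in each direction in turn produces
\[
\norm{z-q}_{L^m(\kij)}\leq C\bigl(h_i^{k+1}\norm{\dx^{k+1}z}_{L^m(\kij)}+h_j^{k+1}\norm{\dy^{k+1}z}_{L^m(\kij)}\bigr),
\]
which combined with the previous inequality is exactly \eqref{2L2:app}. The main obstacle is precisely this anisotropic Bramble--Hilbert step: a naive reference-element bound using the isotropic $W^{k+1,m}$ seminorm would only yield $h^{k+1}|z|_{W^{k+1,m}(\kij)}$ with $h=\max(h_i,h_j)$, which is far too crude on the long thin elements produced by the layer-adapted meshes of Section~\ref{sec:meshes}. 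The tensor-product splitting that cleanly separates the two directional contributions is exactly what is carried out in \cite{Apel1999}, so the conclusion follows.
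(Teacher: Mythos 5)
The paper does not prove this lemma at all -- it is quoted directly from \cite{Apel1999} -- so there is no in-paper argument to compare against; your reconstruction is essentially the standard anisotropic-interpolation proof from that reference (pullback, stability, polynomial reproduction, and the tensor-product splitting $z-\pi_x\pi_y z=(z-\pi_x z)+\pi_x(z-\pi_y z)$ that keeps the two directional derivative contributions separate), and it is correct in substance. One step deserves tightening: asserting that ``$\Pi_{\hat\omega}$ depends continuously on $\hat\omega$'' does not by itself give a stability constant that is uniform over all elements of the mesh family, since the admissible set of pulled-back weights is not compact. The clean fix is the direct energy argument: for $m=2$,
\begin{equation*}
\omega_0\norm{\Pi_{\omega}z}_{L^2(\kij)}^2\le\dual{\omega\Pi_{\omega}z}{\Pi_{\omega}z}_{\kij}=\dual{\omega z}{\Pi_{\omega}z}_{\kij}\le\norm{\omega}_{L^\infty}\norm{z}_{L^2(\kij)}\norm{\Pi_{\omega}z}_{L^2(\kij)},
\end{equation*}
which yields $C=\norm{\omega}_{L^\infty}/\omega_0$ independent of the element, and the $m=\infty$ case then follows from the equivalence of norms on $\mathcal{Q}^k(\hat K)$ after pullback. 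With that substitution your proof is complete and matches the cited source.
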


\begin{lemma}\label{lemma:L2}
	Let $\sigma\geq k+1.5$. Then, there holds
	\begin{subequations}\label{2dL2:property}
		\begin{align}
		\label{app:u:L2}
		\norm{u-\Pi_{\omega} u}
		&\; \leq C 
		\Big[ \sfq (N^{-1}\max|\ppri|) ^{k+1}+  N^{-(k+1)} \Big],
		\\
		\label{app:u:inf}
		\norm{u-\Pi_{\omega} u}_{L^\infty(\Omega_N)}
		&\; \leq C 
		(N^{-1}\max|\ppri|) ^{k+1},
		\\
		\label{app:u:bd:L2}
		\sum_{j=1}^{N}\sum_{i=1}^{N-1}
		\norm{(u-\Pi_{\omega} u)_{i,y}^{-}}^2_{J_j}
		&\;\leq C (N^{-1}\max|\psi^{\prime}|)^{2k+1},
		\\
		\label{app:u:N:L2}
		\sum_{j=1}^{N}\sum_{i=0}^{N}
		\dual{1}{\jump{u-\Pi_{\omega} u}^2_{i,y}}_{J_j}
		&\; \leq C(N^{-1}\max|\psi^{\prime}|)^{2k+1},
		\\
		\label{app:pq:L2}
		\norm{p-\Pi_{\omega} p}
		&\; \leq C  \sq^{\frac34} (N^{-1}\max|\ppri|)^{k+1},
		\\
		\label{app:pq:N:L2}
		\sum_{j=1}^{N} 
		\norm{(p-\Pi_{\omega} p)_{N,y}^{-}}^2_{J_j}
		&\;\leq C \sq (N^{-1}\max|\psi^{\prime}|)^{2(k+1)},
		\\
		\label{app:pq:bd:L2}
		\sum_{j=1}^{N}\sum_{i=0}^{N-1}
		\norm{(p-\Pi_{\omega} p)_{i,y}^{+}}^2_{J_j}
		&\; \leq C\sq(N^{-1}\max|\psi^{\prime}|)^{2k+1},
		\end{align}
	\end{subequations}
	where $C>0$ is independent of $\varepsilon$ and $N$.
	A similar procedure applies for $u$ and $q$ in other spatial directions. 
\end{lemma}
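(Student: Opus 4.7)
My plan is to exploit the solution decomposition of Proposition \ref{thm:reg:2d}, $u=S+\sum_{k=1}^4 W_k+\sum_{k=1}^4 Z_k$, together with the linearity of $\Pi_\omega$. I will estimate the projection error of each constituent on every element $K_{ij}$ using the Apel-type bound \eqref{2L2:app} (and its stability companion \eqref{2L2:stb}), then assemble the global bound by summing contributions with the help of the key quantities $\mcs_i$ of Lemma \ref{lemma:1} and the mesh geometry of Lemma \ref{lem:h:min:max}. Because both coordinate directions play symmetric roles, it suffices to track one representative of each component type: the smooth $S$, the boundary layer $W_1$ (decaying only in $x$), and the corner layer $Z_1$ (decaying in both variables).

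The smooth part contributes $CN^{-(k+1)}$ in \eqref{app:u:L2} and \eqref{app:u:inf} since $|\dx^i\dy^j S|\le C$ and $h_i,h_j\le CN^{-1}$. For $W_1$, I would insert \eqref{reg:u:2d} into \eqref{2L2:app} and reduce the layer error on $K_{ij}$ with $i\le\ofh$ to the algebraic identity
\[
h_i^{2k+3}\sq^{-(k+1)}e^{-2\beta x_{i-1}/\ssq}
=\bigl(h_i/\ssq\bigr)^{2k+3}\,\ssq\,e^{-2\beta x_{i-1}/\ssq}
\le C\,\ssq\,\mcs_i^{2k+3},
\]
whose validity uses precisely $\sigma\ge k+1.5=(2k+3)/2$ to absorb the genuine exponential $e^{-2\beta x_{i-1}/\ssq}$ into the $\sigma$-softened factor inside $\mcs_i$. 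Summing in $i$ with \eqref{bound:theta:max}--\eqref{bound:theta:sum} and in $j$ using $\sum_j h_j=1$ yields the $\sfq(N^{-1}\max|\ppri|)^{k+1}$ term of \eqref{app:u:L2}; on the coarse region the tail is bounded crudely by $e^{-\beta\tau/\ssq}\le N^{-\sigma}\le N^{-(k+1)}$ and absorbed into the smooth-part estimate. Replacing $L^2$ by $L^\infty$ uses \eqref{2L2:stb} and \eqref{bound:theta:max} directly and drops the $\sfq$ factor (since $\|e^{-\beta x/\ssq}\|_{L^\infty}=1$), delivering \eqref{app:u:inf}. The corner component $Z_1$ is handled by tensor-product repetition of the same argument in both variables.

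The trace bounds \eqref{app:u:bd:L2}, \eqref{app:u:N:L2}, \eqref{app:pq:N:L2} and \eqref{app:pq:bd:L2} follow by applying the scaled trace inequality $\|v\|^2_{\partial K}\le C(h^{-1}\|v\|^2_K+h\|\nabla v\|^2_K)$ to $u-\Pi_\omega u$ or $p-\Pi_\omega p$ and repeating the componentwise bookkeeping; since $h\sim\ssq\cdot(N^{-1}\max|\ppri|)$ on the layer elements, the single lost factor $h^{-1}$ converts the interior exponent $2k+2$ into the odd exponent $2k+1$ appearing in \eqref{app:u:bd:L2}, \eqref{app:u:N:L2} and \eqref{app:pq:bd:L2}, whereas \eqref{app:pq:N:L2} concerns only the single face $i=N$ (no sum in $i$) and therefore retains the even exponent $2(k+1)$. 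The jump bound \eqref{app:u:N:L2} is deduced from \eqref{app:u:bd:L2} by $(a-b)^2\le 2(a^2+b^2)$ across interior faces together with the boundary conditions. Finally, the $p$-estimates \eqref{app:pq:L2}--\eqref{app:pq:bd:L2} reduce to the $u$-estimates after observing that $p=\sq u_x$, so $\dx^i\dy^j p$ gains an extra factor $\sq$ over $\dx^{i+1}\dy^j u$; this replaces each $\sfq$ in the $u$-bounds by $\sq^{3/4}$ and each unit weight in a trace bound by $\sq$. The main obstacle is the mesh--layer inequality displayed above with the correct fractional exponent of $\sq$; this is exactly where the sharp threshold $\sigma\ge k+1.5$ enters, and a smaller $\sigma$ would leave an uncontrolled remainder from the coarse-region tail.
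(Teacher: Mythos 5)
Your treatment of the volume estimates \eqref{app:u:L2}, \eqref{app:u:inf} and \eqref{app:pq:L2} is essentially the paper's argument: the same solution decomposition, the same interplay between the stability bound \eqref{2L2:stb} and the approximation bound \eqref{2L2:app} (whose minimum produces the factor $\min\{(h_i/\ssq)^{2(k+1)},1\}$), and the same bookkeeping through $\mcs_i$, with $\sigma\ge k+1.5$ entering exactly where you say it does, namely to dominate $e^{-2\beta x_{i-1}/\ssq}$ by the $(2k+3)$-rd power of the $\sigma$-softened exponential inside $\mcs_i$. (Your displayed identity is only literally valid after taking the minimum with the stability bound when $h_i>\ssq$, but you set that up in the preceding sentence.) The reduction of the $p$-estimates via $p=\sq u_x$, and the observation that \eqref{app:pq:N:L2} keeps the even exponent because no sum over $i$ is taken, are likewise consistent with the paper.

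The deviation --- and the gap --- is in the trace estimates \eqref{app:u:bd:L2}, \eqref{app:u:N:L2} and \eqref{app:pq:bd:L2}. You invoke the scaled trace inequality $\|v\|^2_{\partial K}\le C(h^{-1}\|v\|^2_K+h\|\nabla v\|^2_K)$ with $v=u-\Pi_\omega u$. The $h^{-1}$ term does reproduce the exponent $2k+1$ after dividing the layer-part bound $\ssq(N^{-1}\max|\ppri|)^{2(k+1)}$ by $\hbar\ge C\ssq N^{-1}\max|\ppri|$, but the gradient term is not free: controlling $\|\partial_x(u-\Pi_\omega u)\|_{K}$ requires approximation results for derivatives of the weighted $L^2$ projection on anisotropic elements, which are not among the stated tools (\eqref{2L2:stb:app} only bounds the error itself in $L^2$ and $L^\infty$), and for a variable weight $\omega=b(x,y)$ this is not a routine extension. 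The paper avoids the issue entirely: a face $\{x_i\}\times J_j$ has length $h_j$, so $\|(\eta)^-_{i,y}\|^2_{J_j}\le h_j\|\eta\|^2_{L^\infty(K_{ij})}$, and the elementwise $L^\infty$ bounds already established give $\mcs_i^{2(k+1)}$ per layer element; the drop from $2k+2$ to $2k+1$ then comes not from a trace inequality but from summing over the $N/4$ layer columns, spending one factor of $\mcs_i$ on summability via \eqref{bound:theta:sum}. Replace the trace inequality by this elementary face bound (and treat the $+$-traces in \eqref{app:u:N:L2} the same way after your $(a-b)^2\le 2(a^2+b^2)$ split); with that substitution the rest of your outline goes through.
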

\begin{proof}
	Let $\eta_u=u-\Pi_{\omega} u$, $\eta_p=p-\Pi_{\omega} p$, and $\eta_q=q-\Pi_{\omega} q$.
	We prove \eqref{2dL2:property} separately.
	
	(1) Prove \eqref{app:u:L2}.
	Recalling the decomposition $u = S+ \sum_{k=1}^4 W_{k}+\sum_{k=1}^4 Z_{k}$,
	we have
	\begin{align}
	\label{eta:L2norm}
	\norm{\eta_u}\leq \norm{\eta_{S}}
	+ \sum_{k=1}^4 \norm{\eta_{W_k}}
	+ \sum_{k=1}^4 \norm{\eta_{Z_k}}.
	\end{align}
	
	Using \eqref{2L2:app} with $m=2$
	and \eqref{reg:u:2d}, we obtain 
	\begin{align}\label{eta:S}
	\norm{\eta_{S}}\leq C N^{-(k+1)}
	\Big[ 
	\norm{\dx^{k+1}S}+\norm{\dy^{k+1}S}
	\Big]
	\leq C N^{-(k+1)},
	\end{align}
	because $h_i \leq C N^{-1}$ for $i=1,2,\dots,N$.
	
	Moreover, denote $\xl=\Omega_{11}\cup\Omega_{21}\cup\Omega_{31}$,
	$\xm=\Omega_{12}\cup\Omega_{22}\cup\Omega_{32}$, and $\xr=\Omega_{13}\cup\Omega_{23}\cup\Omega_{33}$; then, we have
	\begin{equation}\label{w1}
	\norm{\eta_{W_1}}^2
	=\sum_{\kij \in \xl}\norm{\eta_{W_1}}_{\kij}^2
	+\sum_{\kij \in \xm\cup\xr}
	\norm{\eta_{W_1}}_{\kij}^2
	\equiv \Lambda_1 + \Lambda_2.
	\end{equation}
	
	For $\Lambda_1$,
	we use \eqref{2L2:stb}
	and \eqref{2L2:app} with $m=2$ to obtain the two estimates
	\begin{align}\label{lambda:stb}
	\Lambda_1\leq &\; C\sum_{\kij \in \xl}
	\norm{W_1}_{\kij}^2
	\leq  C\sum_{\kij \in \xl}\norm{e^{-\frac{\beta x}{\ssq}}}_{\kij}^2,
	\\
	\label{lambda:app}
	\Lambda_1\leq &\; C\sum_{\kij \in \xl}
	\Big[
	h_i^{2(k+1)} \norm{\dx^{k+1}W_1}_{\kij}^2+
	h_j^{2(k+1)} \norm{\dy^{k+1}W_1}_{\kij}^2
	\Big]
	\nonumber\\
	\leq &\; C \sum_{\kij \in \xl}
	\Big[
	\Big(\frac{h_i}{\ssq}\Big)^{2(k+1)}+h_j^{2(k+1)}
	\Big]
	\norm{e^{-\frac{\beta x}{\ssq}}}_{\kij}^2,
	\end{align}
	respectively, where we have used \eqref{reg:u:2d}.
	Combining \eqref{lambda:stb} with \eqref{lambda:app} 
	and using $h_{i}/\ssq \geq C N^{-1}\geq C h_{j}$,
	$\sum_{j=1}^N h_j=1$, $\sigma\geq k+1.5$, and \eqref{bound:key},
	we have
	\begin{align}\label{lambda:1}
	\Lambda_1
	\leq &\;
	C\sum_{\kij \in \xl}
	\min\Big\{\Big(\frac{h_i}{\ssq}\Big)^{2(k+1)}
	+h_j^{2(k+1)},1\Big\}
	\norm{e^{-\frac{\beta x}{\ssq}}}_{\kij}^2
	\nonumber\\
	\leq &\; C\sum_{i=1}^{\ofh}\min\Big\{\Big(\frac{h_i}{\ssq}\Big)^{2(k+1)},1\Big\} 
	\ssq e^{-\frac{2\beta x_{i-1}}{\ssq}}
	\min\Big\{\frac{h_i}{\ssq},1\Big\} \Big(\sum_{j=1}^{N} h_j\Big)
	\nonumber\\
	\leq &\;
	C\ssq \max_{1\leq i\leq\ofh}\mcs_i^{2(k+1)}
	\sum_{i=1}^{\frac{N}{4}}\mcs_i
	\leq C\ssq(N^{-1}\max|\psi^\prime|)^{2(k+1)},
	\end{align}
	where we used the trivial inequality
	\[
	\norm{ e^{-\frac{\beta x}{\ssq}} }_{\kij}^2
	=\frac{\ssq}{2\beta}h_j e^{-\frac{2\beta x_{i-1}}{\ssq}}
	(1-e^{-\frac{2\beta h_i}{\ssq}})
	\leq C \ssq h_j e^{-\frac{2\beta x_{i-1}}{\ssq}}
	\min\Big\{\frac{h_i}{\ssq},1\Big\},
	\]
	because $1-e^{-x}\leq \min\{1,x\}$ for $x\geq0$.
	For $\Lambda_2$, we use \eqref{2L2:stb} and \eqref{reg:u:2d} to obtain 
	\begin{align} \label{lambda:2}
	\Lambda_2 \leq 
	C\sum_{\kij \in \xm\cup\xr}
	\norm{W_1}_{\kij}^2 
	\leq
	C\int_{0}^{1}dy\int_{\tau}^{1} e^{-\frac{2\beta x}{\ssq} } dx
	\leq  C\ssq e^{-\frac{2\beta \tau}{\ssq}}
	\leq C\ssq N^{-2\sigma}.
	\end{align}
	Inserting \eqref{lambda:1} and \eqref{lambda:2} into \eqref{w1} yields
	\begin{align}\label{eta:L2:W1}
	\norm{\eta_{W_1}}
	\leq C\sfq[(N^{-1}\max|\psi^{\prime}|)^{k+1}+N^{-\sigma}].
	\end{align}
	
	In addition,
	\[
	\norm{\eta_{Z_1}}^2
	=\sum_{\kij \in \Omega_{11}} \norm{\eta_{Z_1}}_{\kij}^2
	+\sum_{\kij \in \Omega_N\setminus\Omega_{11}} \norm{\eta_{Z_1}}_{\kij}^2
	\equiv \Xi_1 + \Xi_2.
	\]
	Using \eqref{2L2:stb}
	and \eqref{2L2:app} with $m=2$ gives the two estimates
	\begin{align}\label{xi:stb}
	\Xi_1\leq &\; C\sum_{\kij \in \Omega_{11}}
	\norm{Z_1}_{\kij}^2
	\leq  C\sum_{\kij \in \Omega_{11}}\norm{e^{-\frac{\beta (x+y)}{\ssq}}}_{\kij}^2,
	\\
	\label{xi:app}
	\Xi_1\leq &\; C\sum_{\kij \in \Omega_{11}}
	\Big[
	h_i^{2(k+1)} \norm{\dx^{k+1}Z_1}_{\kij}^2+
	h_j^{2(k+1)} \norm{\dy^{k+1}Z_1}_{\kij}^2
	\Big]
	\nonumber\\
	\leq &\; C \sum_{\kij \in \Omega_{11}}
	\Big[
	\Big(\frac{h_i}{\ssq}\Big)^{2(k+1)}
	+\Big(\frac{h_j}{\ssq}\Big)^{2(k+1)}
	\Big]
	\norm{e^{-\frac{\beta (x+y)}{\ssq}}}_{\kij}^2,
	\end{align}
	respectively. This leads to
	\begin{align}\label{xi:1}
	\Xi_1
	\leq &\;
	C\sum_{\kij \in \Omega_{11}}
	\min\Big\{\Big(\frac{h_i}{\ssq}\Big)^{2(k+1)}
	+\Big(\frac{h_j}{\ssq}\Big)^{2(k+1)},1\Big\}
	\norm{e^{-\frac{\beta (x+y)}{\ssq}}}_{\kij}^2
	\nonumber\\
	\leq &\; C\sum_{i=1}^{\ofh}\min\Big\{\Big(\frac{h_i}{\ssq}\Big)^{2(k+1)},1\Big\} 
	\sq e^{-\frac{2\beta x_{i-1}}{\ssq}}
	\min\Big\{\frac{h_i}{\ssq},1\Big\} \sum_{j=1}^{\ofh} 
	e^{-\frac{2\beta y_{j-1}}{\ssq}}
	\min\Big\{\frac{h_j}{\ssq},1\Big\}
	\nonumber\\
	&\; + C\sum_{j=1}^{\ofh}\min\Big\{\Big(\frac{h_j}{\ssq}\Big)^{2(k+1)},1\Big\} 
	\sq e^{-\frac{2\beta y_{j-1}}{\ssq}}
	\min\Big\{\frac{h_j}{\ssq},1\Big\} \sum_{i=1}^{\ofh} 
	e^{-\frac{2\beta x_{i-1}}{\ssq}}
	\min\Big\{\frac{h_i}{\ssq},1\Big\}
	\nonumber\\
	\leq &\;
	C\sq \max_{1\leq i\leq\ofh}
	\Theta_i^{2(k+1)}
	\sum_{i=1}^{\frac{N}{4}}\Theta_i
	\sum_{j=1}^{\frac{N}{4}}\Theta_j
	+ 
	C\sq \max_{1\leq j\leq\ofh}
	\Theta_j^{2(k+1)}
	\sum_{j=1}^{\frac{N}{4}}	\Theta_j
	\sum_{i=1}^{\frac{N}{4}} \Theta_i
	\nonumber\\
	\leq &\;
	C\sq(N^{-1}\max|\psi^\prime|)^{2(k+1)},
	\end{align}
	where we used $\sigma \geq k+1.5$, Lemma \ref{lemma:1},
	the trivial inequality $\min\{1,a+b\}\leq\min\{1,a\}+\min\{1,b\}$, 
	and 
	\begin{align*}
	\norm{ e^{-\frac{\beta (x+y)}{\ssq}} }_{\kij}^2
	=&\;\frac{\sq}{4\beta^2} 
	e^{-\frac{2\beta (x_{i-1}+y_{j-1})}{\ssq}}
	(1-e^{-\frac{2\beta h_i}{\ssq}})
	(1-e^{-\frac{2\beta h_j}{\ssq}})
	\nonumber\\
	\leq&\; C \sq
	e^{-\frac{2\beta (x_{i-1}+y_{j-1})}{\ssq}}
	\min\Big\{\frac{h_i}{\ssq},1\Big\}
	\min\Big\{\frac{h_j}{\ssq},1\Big\}.
	\end{align*}

	Similar to before, we have
	\begin{align} \label{xi:2}
	\Xi_2 \leq 
	C\sum_{\kij \in \Omega_N\setminus\Omega_{11}}
	\norm{Z_1}_{\kij} ^2
	\leq C\int_{\Omega_N\setminus\Omega_{11}} e^{-\frac{2\beta (x+y)}{\ssq}} dxdy
	\leq  C\sq e^{-\frac{2\beta \tau}{\ssq}}
	\leq C\sq N^{-2\sigma}.
	\end{align}
	
	From \eqref{xi:1} and \eqref{xi:2}, we obtain
	\begin{align}\label{eta:L2:Z1}
	\norm{\eta_{Z_1}}
	\leq C \ssq (N^{-1}\max|\ppri|)^{k+1}.
	\end{align}
	Similarly, we can bound the other terms in \eqref{eta:L2norm}
	and arrive at \eqref{app:u:L2}.
	
	(2) Prove \eqref{app:u:inf} and \eqref{app:u:bd:L2}.	
	It can be seen that
	\begin{align*}
	\sum_{j=1}^{N}\sum_{i=1}^{N-1}
	\norm{(\eta_S)_{i,y}^{-}}_{J_j}^2
	\leq&\; \sum_{j=1}^{N}\sum_{i=1}^{N-1} h_j \norm{\eta_S}_{L^\infty(\kij)}^2
	\leq C \sum_{j=1}^{N}\sum_{i=1}^{N-1} h_jN^{-2(k+1)}
	\leq C N^{-(2k+1)}.
	\end{align*}
	
	For $\kij \in \xl$, we obtain from the $L^\infty$-stability \eqref{2L2:stb}, the $L^\infty$-approximation \eqref{2L2:app},
	and Lemma \ref{lemma:1} that
	\begin{align}\label{w1:0}
	&\norm{\eta_{W_1}}_{L^\infty(\kij)}^2 \nonumber\\
	\leq&\; C \min\Big\{
	\norm{W_1}_{L^\infty(\kij)}^2,
	h_i^{2(k+1)} \norm{\dx^{k+1}W_1}_{L^\infty(\kij)}^2+
	h_j^{2(k+1)} \norm{\dy^{k+1}W_1}_{L^\infty(\kij)}^2\Big\}
	\nonumber\\
	\leq&\;
	C\min\Big\{1,\Big(\frac{h_i}{\ssq}\Big)^{2(k+1)}
	+h_j^{2(k+1)}\Big\} 
	e^{-\frac{2\beta x_{i-1}}{\ssq}}
	\nonumber\\
	\leq&\;
	\leq C \Theta_i^{2(k+1)}
	\leq C (N^{-1}\max|\ppri|)^{2(k+1)},
	\end{align}
	where we have used \eqref{reg:u:2d} and $h_i/\ssq \geq CN^{-1} \geq h_j$. 
	For $\kij \in \xm\cup\xr$, we obtain from the $L^\infty$-stability \eqref{2L2:stb} and $\sigma \geq k+1$ that
	\begin{align*}
	\norm{\eta_{W_1}}_{L^\infty(\kij)}^2 \leq &\;
	C \norm{W_1}_{L^\infty(\kij)}^2
	\leq  C e^{-\frac{2\beta x_i}{\ssq}} 
	\leq  C e^{-\frac{2\beta \tau}{\ssq}}
	\leq C N^{-2(k+1)}.
	\end{align*}
	Consequently, we obtain from \eqref{bound:key} that
	\begin{align}
	\sum_{j=1}^{N}\sum_{i=1}^{N-1}\norm{(\eta_{W_1})_{i,y}^{-}}_{J_j}^2
	\leq&\; C\sum_{i=1}^{N/4}\sum_{j=1}^{N} h_j \Theta_i^{2(k+1)}
	+C\sum_{i=N/4+1}^{N}\sum_{j=1}^{N} h_j N^{-2(k+1)}\nonumber
	\\
	\leq&\; C\max_{1\leq i\leq N/4} \Theta_i^{2k+1}
	\sum_{i=1}^{N/4} \Theta_i \sum_{j=1}^{N} h_j
	+C N^{-(2k+1)}\nonumber
	\\
	\leq&\; C[(N^{-1}\max|\ppri|)^{(2k+1)}+N^{-(2k+1)}].
	\end{align}

	Moreover, we have $\norm{\eta_{Z_1}}_{L^\infty(\kij)}^2\leq C N^{-2(k+1)}$	 
	for $\kij \in \Omega_N\setminus\Omega_{11}$. 
	For $\kij \in \Omega_{11}$, we obtain
	\begin{align*}
	&\norm{\eta_{Z_1}}_{L^\infty(\kij)}^2 \nonumber\\
	\leq&\; C \min\Big\{
	\norm{Z_1}_{L^\infty(\kij)}^2,
	h_i^{2(k+1)} \norm{\dx^{k+1}Z_1}_{L^\infty(\kij)}^2+
	h_j^{2(k+1)} \norm{\dy^{k+1}Z_1}_{L^\infty(\kij)}^2\Big\}
	\nonumber\\
	\leq&\;
	C\min\Big\{1,\Big(\frac{h_i}{\ssq}\Big)^{2(k+1)}
	+\Big(\frac{h_j}{\ssq}\Big)^{2(k+1)}\Big\} 
	e^{-\frac{2\beta (x_{i-1}+y_{j-1})}{\ssq}}
	\nonumber\\
	\leq&\;  
	C (\Theta_i^{2(k+1)} + \Theta_j^{2(k+1)})
	\leq C (N^{-1}\max|\ppri|)^{2(k+1)},
	\end{align*}
	using $\sigma\geq k+1$. In a similar fashion, we obtain
	\begin{align}
	\sum_{j=1}^{N}\sum_{i=1}^{N-1}\norm{(\eta_{Z_1})_{i,y}^{-}}_{J_j}^2
	\leq C[(N^{-1}\max|\ppri|)^{(2k+1)}+N^{-(2k+1)}].
	\end{align}
	
	From the solution decomposition and similar arguments for the other terms, we arrive at \eqref{app:u:inf} and \eqref{app:u:bd:L2}.

	(3) Prove \eqref{app:u:N:L2}.
	We start from the following inequality:
	\begin{align*}
	\sum_{j=1}^{N}\sum_{i=0}^{N}
	\dual{1}{\jump{\eta_u}^2_{i,y}}_{J_j}
	\leq&\;
	2\Big[\sum_{j=1}^{N}\sum_{i=1}^{N}\int_{J_j}[(\eta_u)^+_{i-1,y}]^2 dy
	+\sum_{j=1}^{N}\sum_{i=1}^{N}\int_{J_j}[(\eta_u)^-_{i,y}]^2 dy
	\Big].
	\end{align*}
	For the first term, we notice that
	\[
	\sum_{j=1}^{N}\sum_{i=1}^{N}\int_{J_j}[(\eta_u)^+_{i-1,y}]^2 dy
	\leq
	\sum_{j=1}^{N}\sum_{i=1}^{N}h_{j}\norm{\eta_u}^2_{L^\infty(K_{ij})}
	\]
	and proceed as before.
	For the second term, we use \eqref{app:u:bd:L2}.
	Thus, \eqref{app:u:N:L2} follows.
	
	The remaining inequalities of \eqref{2dL2:property}
	can be proved analogously; we omit the	details here.
\end{proof}

\begin{theorem} \label{thm:balanced}
	Suppose that $\lambda_{i,y}=\lambda_{x,j}= \ssq$ for  $i,j=0,1,\cdots,N$.
	Let $\bm w=(u,p,q)$ be the solution to problem \eqref{spp:R-D},
	satisfying Proposition \ref{thm:reg:2d}; furthermore, let $\wN=(\uN,\pN,\qN)\in \spc^3$ be the numerical solution of
	the LDG scheme \eqref{LDG:scheme:2d} on layer-adapted meshes \eqref{layer-adapted} when $\sigma\geq k+1.5$.
	Then, there exists a constant $C>0$, independent of $\sq$ and $N$,
	such that
	\begin{align}\label{error:balanced:general}
	\ba{\bm w-\wN} \leq	CN^{-k}(\max|\ppri|)^{k+1/2}.
	\end{align}
\end{theorem}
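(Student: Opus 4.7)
The plan is to decompose the total error as $\bm w - \wN = \bm\eta + \bm\xi$, where $\bm\eta := \bm w - \bm\Pi_\omega \bm w$ is the projection error (with $\Pi_\omega$ acting componentwise on $u,p,q$) and $\bm\xi := \bm\Pi_\omega \bm w - \wN \in \spc^3$ is the discrete error. The triangle inequality gives
\[
\ba{\bm w - \wN} \le \ba{\bm\eta} + \ba{\bm\xi},
\]
and I would estimate the two pieces by quite different tools.

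For $\ba{\bm\eta}$ I would insert the individual bounds of Lemma~\ref{lemma:L2} into the definition \eqref{balanced:norm:2d} of the balanced norm. The factor $\sq^{-3/4}$ in front of $\norm{\eta_p}$ is exactly compensated by the $\sq^{3/4}$ of \eqref{app:pq:L2}; the jump contributions are handled by \eqref{app:u:N:L2}; and $\norm{b^{1/2}\eta_u}$ is of strictly lower order by \eqref{app:u:L2}. Combining everything yields $\ba{\bm\eta} \le C(N^{-1}\max|\ppri|)^{k+1/2}$, which is already dominated by the right-hand side of \eqref{error:balanced:general}.

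The discrete error is attacked through Galerkin orthogonality: from $\Bln{\bm w - \wN}{\bm\varphi} = 0$ we get $\Bln{\bm\xi}{\bm\varphi} = -\Bln{\bm\eta}{\bm\varphi}$ for every $\bm\varphi\in\spc^3$. A coefficientwise comparison of \eqref{energy:norm:2d} and \eqref{balanced:norm:2d}, using the prescribed $\lambda_{i,y}=\lambda_{x,j}=\ssq$, gives
\[
\ba{\bm\xi}^2 \le C\,\invssq\,\enorm{\bm\xi}^2,
\]
so it suffices to prove $\enorm{\bm\xi} \le C\ssq\,N^{-k}(\max|\ppri|)^{k+1/2}$. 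Testing with $\bm\varphi = \bm\xi$ yields $\enorm{\bm\xi}^2 = -\Bln{\bm\eta}{\bm\xi}$, and the $L^2$-orthogonality built into $\Pi_\omega$ eliminates the volume transport contributions in $\mathcal{T}_2$ and $\mathcal{T}_3$, such as $\dual{\eta_u}{\xi_{p,x}}$ and $\dual{\eta_p}{\xi_{u,x}}$, because the tested partial derivative lies in $\mathcal{Q}^k(K)$. What survives are the $\mathcal{T}_1$ mass terms, the interface pairings involving $\eta_u^\pm, \eta_p^\pm, \eta_q^\pm$, and the penalty term $\mathcal{T}_4$. Each of these is treated by Cauchy--Schwarz together with the edge estimates \eqref{app:u:bd:L2}, \eqref{app:pq:N:L2}, \eqref{app:pq:bd:L2}, followed by Young's inequality with an $\sq$-calibrated weight so that a small multiple of $\enorm{\bm\xi}^2$ is absorbed on the left.

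The main obstacle I anticipate is the $\mathcal{T}_2$ interface pairing $\dual{\eta_u^-}{\jump{\xi_p}}$: the jump $\jump{\xi_p}$ does not appear in $\enorm{\bm\xi}$, and a blunt inverse estimate over the anisotropic layer elements, where $h_{x,i}\sim \ssq\,N^{-1}\max|\ppri|$ by Lemma~\ref{lem:h:min:max}, would introduce factors that ruin $\sq$-uniformity. One route I would pursue is to tune the weight $\omega$ so that the weighted $L^2$ orthogonality also annihilates these interface pairings for the relevant trial functions, thereby reducing $\Bln{\bm\eta}{\bm\xi}$ to quantities already controlled by $\enorm{\bm\xi}^2$; a fallback is to lift $\jump{\xi_p}$ through the third LDG equation in order to re-express it in terms of $\xi_u$-jumps, which do enter $\enorm{\bm\xi}$. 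Once $\enorm{\bm\xi} \le C\ssq\,N^{-k}(\max|\ppri|)^{k+1/2}$ is established, the balanced-norm estimate follows from $\ba{\bm\xi}^2\le C\invssq\enorm{\bm\xi}^2$, and combining with the bound on $\ba{\bm\eta}$ proves \eqref{error:balanced:general}.
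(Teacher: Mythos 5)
Your overall architecture coincides with the paper's: split the error into a projection part $\bm\eta$ and a discrete part $\bm\xi\in\spc^3$, use Galerkin orthogonality and coercivity to get $\enorm{\bm\xi}^2=\Bln{\bm\eta}{\bm\xi}$, convert with $\ba{\bm\xi}\le\sq^{-1/4}\enorm{\bm\xi}$, and bound $\ba{\bm\eta}$ directly from Lemma \ref{lemma:L2}. (One detail: the paper chooses the projections non-uniformly, $\Pi_b$ for $u$ and the plain $\Pi$ for $p,q$, so that \emph{all} of $\mathcal{T}_1$ vanishes --- with your componentwise $\Pi_\omega$ you must take $\omega=1$ on $p,q$, or else the $\sq^{-1}$-weighted mass terms survive and are fatal; also, your target $\enorm{\bm\xi}\le C\ssq N^{-k}(\max|\ppri|)^{k+1/2}$ overshoots: only the power $\sq^{1/4}$ is obtained, and $\sq^{1/4}$ is all that is needed.)

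The genuine gap is that you identify the decisive term, $\sum_{j}\sum_{i}\dual{(\eta_u)^-_{i,y}}{\jump{\xi_p}_{i,y}}_{J_j}$, but do not prove a bound for it; both escape routes you sketch fail or are unnecessary. Tuning the weight $\omega$ cannot annihilate this pairing, because $\Pi_\omega$ enforces only volume orthogonality on each $K$ and imposes nothing on traces; and ``lifting $\jump{\xi_p}$ through the third LDG equation'' is not how the paper proceeds and is not needed. The ``blunt'' inverse estimate you reject is in fact exactly what works: by the discrete trace inequality $\norm{\jump{\xi_p}_{i,y}}_{J_j}\le C\hbar^{-1/2}\norm{\xi_p}$ with $\hbar\ge C\ssq N^{-1}\max|\ppri|$ from Lemma \ref{lem:h:min:max}, together with the edge estimate \eqref{app:u:bd:L2} giving $(N^{-1}\max|\ppri|)^{k+1/2}$ and the compensation $\norm{\xi_p}\le\ssq\,\sq^{-1/2}\norm{\xi_p}\le\ssq\enorm{\bm\xi}$, the net power of $\sq$ is $\sq^{-1/4}\cdot\sq^{1/2}=\sq^{1/4}$, so uniformity is \emph{not} ruined and one obtains $C\sfq(N^{-1}\max|\ppri|)^{k}\enorm{\bm\xi}$ --- precisely the $N^{-k}$ rate in \eqref{error:balanced:general}. (The paper additionally has to control the volume term $\dual{\eta_u}{\xi_{p,x}}$ by the same $\hbar^{-1}$ inverse inequality, since its $\Pi_b$-projection of $u$ does not make that term vanish; with $\omega=1$ your version of that term is zero, but the interface term above still dictates the final order.) Without supplying this trace-inequality argument, your proof of the discrete-error bound is incomplete.
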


\begin{proof}
	Denote $\bm e  =\bm w-\wN=\bm \eta-\bm \xi$ as
	\begin{subequations}\label{error:decomposition}
		\begin{align}
		\bm \eta &\; = (\eta_u,\eta_p,\eta_q)
		= (u-\Pi_{b} u,p-\Pi p,q-\Pi q),
		\\
		\bm \xi &\; = (\xi_u,\xi_p,\xi_q)
		= ( \uN-\Pi_{b} u, \pN-\Pi p, \qN-\Pi q)\in \spc^3,
		\end{align}
	\end{subequations}
	where
	$\Pi_{b}$ is defined in \eqref{def:prj:WL2} with weight $\omega=b$.
	
	From Proposition \ref{thm:reg:2d}
	and the consistency of numerical flux,
	we obtain the error equation:
	\begin{align}
	\Bln{\bm \xi}{\vph} =&\; \Bln{\bm\eta}{\vph}
	\\
	=&\; \Ton{\bm \eta}{\vph}+\Ttw{\eta_u}{\vph}
	+\Tth{\bm \eta}{\vphu}+\Tfr{\eta_u}{\vphu}
	\nonumber.
	\end{align}
	
	It can be seen that $\Ton{\bm \eta}{\vph}=0$.	
	
	To bound $\Ttw{\eta_u}{\vph}$, we use the Cauchy--Schwarz inequality,
	inverse inequality, $C \ssq N^{-1}\max|\ppri| \leq \hbar \leq CN^{-1}$,
	\eqref{app:u:inf}, and $\sq^{-1/2}\norm{\vphp} \leq \enorm{\vph}$ to obtain
	\begin{align*}
	|\dual{\eta_u}{\vphp_x}|\leq&\; \sum_K \norm{\eta_u}_K \norm{\vphp_x}_K
	\leq C\sum_K |K|^{1/2}\norm{\eta_u}_{L^\infty(K)} h_x^{-1} \norm{\vphp}_K
	\\
	\leq&\; C\Big(\sum_K \frac{h_y}{h_x}\norm{\eta_u}^2_{L^\infty(K)}
	\Big)^{1/2} \norm{\vphp}
	\leq
	C \sfq N^{-k}(\max|\ppri|)^{k+1/2} \enorm{\vph},
	\end{align*}
	and we used \eqref{app:u:bd:L2} to obtain
	\begin{align*}
	\Big|\sum_{j=1}^{N}\sum_{i=1}^{N-1}
	\dual{(\eta_u)_{i,y}^{-}}{\jump{\vphp}_{i,y}}_{J_j}\Big|
	\leq&\;
	C\Big(
	\sum_{j=1}^{N}\sum_{i=1}^{N-1}
	\norm{(\eta_u)_{i,y}^{-}}_{J_j}^2
	\Big)^{1/2} (\hbar^{-1/2}\norm{\vphp})
	\nonumber\\
	\leq&\; C \sfq (N^{-1}\max|\ppri|)^{k} \enorm{\vph}.
	\end{align*}
	Consequently, we have
	\begin{align} \label{bnorm:term:2}
	\Ttw{\eta_u}{\vph} \leq&\;C \sfq N^{-k}(\max|\ppri|)^{k+1/2} \enorm{\vph}.
	\end{align}
	
	Analogously, we have
	\begin{align} \label{bnorm:term:3}
	\Tth{\eta}{\vphu} &\; =
	\sum_{j=1}^{N}
	\Big[
	\sum_{i=0}^{N-1}
	\dual{(\eta_p)^{+}_{i,y}}{\jump{\vphu}^{-}_{i,y}}_{J_j}
	-\dual{(\eta_p)^{-}_{N,y}}{\vphu^{-}_{N,y}}_{J_j}
	\Big]
	\nonumber\\
	&\;\quad +
	\sum_{i=1}^{N}
	\Big[
	\sum_{j=0}^{N-1}
	\dual{(\eta_q)^{+}_{x,j}}{\jump{\vphu}^{-}_{x,j}}_{I_i}
	-\dual{(\eta_q)^{-}_{x,N}}{\vphu^{-}_{x,N}}_{I_i}
	\Big]
	\nonumber\\
	&\; \leq
	C\Big(
	\sum_{j=1}^{N}\sum_{i=0}^{N-1}
	\sq^{-1}\norm{(\eta_p)_{i,y}^{+}}_{J_j}^2 
	+\sum_{j=1}^{N}\sq^{-1}\norm{(\eta_p)_{N,y}^{-}}^2_{J_j}
	\Big)^{1/2} (\ssq\hbar^{-1/2}\norm{\vphu})
	\nonumber\\
	&\;\quad+C\Big(
	\sum_{i=1}^{N}\sum_{j=0}^{N-1}
	\sq^{-1}\norm{(\eta_q)_{x,j}^{+}}_{I_i}^2
	+\sum_{i=1}^{N}\sq^{-1}\norm{(\eta_q)_{x,N}^{-}}^2_{I_i}
	\Big)^{1/2} (\ssq\hbar^{-1/2}\norm{\vphu})
	\nonumber\\
	&\; \leq C\hbar^{-1/2} \ssq
	(N^{-1}\max|\ppri|)^{k+1/2}
	\norm{\vphu}
	\nonumber\\
	&\; \leq C \sfq 
	(N^{-1}\max|\ppri|)^{k} \enorm{\vph}.
	\end{align}
	Here, \eqref{app:pq:N:L2} and \eqref{app:pq:bd:L2} were used.
	
	Finally, we use the Cauchy--Schwarz inequality, \eqref{app:u:N:L2}, and the assumption that $\lambda_{i,y}=\lambda_{x,j}=\ssq$ for $i,j=0,1,\cdots,N$, to obtain
	\begin{align}  \label{bnorm:term:4}
	\Tfr{\eta_u}{\vphu} = 
	&\;
	\sum_{j=1}^{N}
	\sum_{i=0}^{N}
	\dual{\lambda_{i,y}\jump{\eta_u}_{i,y}}
	{\jump{\vphu}_{i,y}}_{J_j}
	+\sum_{i=1}^{N}
	\sum_{j=0}^{N}
	\dual{\lambda_{x,j}\jump{\eta_u}_{x,j}}
	{\jump{\vphu}_{x,j}}_{I_i}
	\nonumber\\
	\leq &\;
	\Big[
	\sum_{i=0}^{N}\sum_{j=1}^{N}
	\lambda_{i,y} \dual{1}{\jump{\eta_u}^2_{i,y}}_{J_j}
	+\sum_{j=0}^{N}\sum_{i=1}^{N}
	\lambda_{x,j} \dual{1}{\jump{\eta_u}^2_{x,j}}_{I_i}
	\Big]^{1/2} \enorm{\vph}
	\nonumber\\
	\leq &\;
	C\sfq (N^{-1}\max|\ppri|)^{k+1/2} \enorm{\vph}.
	\end{align}
	
	From \eqref{bnorm:term:2}--\eqref{bnorm:term:4}, we have
	\begin{align}
	\enorm{\bm \xi}^2 =B(\bm \xi;\bm \xi)
	=B(\bm \eta;\bm \xi)
	\leq C \sfq N^{-k}(\max|\ppri|)^{k+1/2} \enorm{\bm \xi},
	\end{align}
	which implies
	\begin{align} \label{bnorm:xi}
	\ba{\bm \xi} \leq \sq^{-1/4}\enorm{\bm \xi} \leq 
	C \sfq N^{-k}(\max|\ppri|)^{k+1/2}.
	\end{align}
	Using \eqref{2dL2:property} and a trivial inequality, we derive \eqref{error:balanced:general}.
\end{proof}

\subsection{Improvement of convergence in energy norm}

In this subsection, we perform an elaborate analysis and
establish an optimal convergence result in the energy norm.
The following local Gauss--Radau projections are required.

For each element $K_{ij}\in \Omega_N$ and for any $z\in
H^1(\kij)$,
$\Pi^{-}z,\Pi_x^{+} z,\Pi_y^{+} z\in \mathcal{Q}^k(K_{ij})$
are defined as
\begin{subequations} \label{def:prj:GR}
	\begin{align}\label{global:projection:2d:u}
	&
	\begin{cases}
	\dual{\Pi^{-}z}{\vphu}_{\kij}
	= \dual{z}{\vphu}_{\kij},
	\quad &\forall \vphu\in \mathcal{Q}^{k-1}(\kij),
	\\
	\dual{(\Pi^{-}z)_{i,y}^{-}}{\vphu}_{J_j}
	=\dual{z_{i,y}^{-}}{\vphu}_{J_j},
	\quad &\forall \vphu\in \mathcal{P}^{k-1}(J_j),
	\\
	\dual{(\Pi^{-} z)_{x,j}^{-}}{\vphu}_{I_i}
	=\dual {z_{x,j}^{-}}{\vphu}_{I_i},
	\quad &\forall \vphu\in \mathcal{P}^{k-1}(I_i),
	\\
	(\Pi^{-} z)(x_{i}^{-},y_{j}^{-})=z(x_{i}^{-},y_{j}^{-}).
	\end{cases}
	\\
	&
	\label{global:projection:2d:q1}
	\begin{cases}
	\dual{\Pi_x^{+} z}{\vphu}_{\kij}
	=
	\dual{z}{\vphu}_{\kij},
	\quad &\forall \vphu\in \mathcal{P}^{k-1}(I_i)\otimes \mathcal{P}^k(J_j),
	\\
	\dual{(\Pi_x^{+} z)_{i,y}^{+}}{\vphu}_{J_j}
	=\dual{z_{i,y}^{+}}{\vphu}_{J_j},
	\quad &\forall \vphu\in \mathcal{P}^k(J_j).
	\end{cases}
	\\
	\label{global:projection:2d:q2}
	&
	\begin{cases}
	\dual{\Pi_y^{+}  z} {\vphu}_{\kij}
	=
	\dual{z}{\vphu}_{\kij}
	\quad &\forall \vphu\in \mathcal{P}^{k}(I_i)\otimes \mathcal{P}^{k-1}(J_j),
	\\
	\dual
	{(\Pi_y^{+} z)^{+}_{x,j}}{\vphu}_{J_j}
	=\dual{z^{+}_{x,j}}{\vphu}_{J_j},
	\quad &\forall \vphu\in \mathcal{P}^k(I_i).
	\end{cases}
	\end{align}
\end{subequations}

\begin{lemma} \label{2GR:inequality}\cite{Apel1999,Zhu:2dMC,Cheng2020}
	There exists a constant $C>0$, independent of the element size and $z$, such that
	\begin{subequations}\label{2GR:stb:app}
		\begin{align}
		\label{2GR:2:stb}
		\norm{\Pi^- z}_{\kij}\leq &\; C
		\big[
		\norm{z}_{\kij} + h_{j}\norm{z^-_{x,j}}_{I_i}
		+h_{i}\norm{z^-_{i,y}}_{J_j} + h_{i}h_{j}|z_{i,j}^-|
		\big],
		\\
		\norm{\Pi^+_x z}_{\kij}\leq &\; C
		\big[
		\norm{z}_{\kij} 
		+h_{i}\norm{z^+_{i,y}}_{J_j}
		\big],
		\\
		\norm{\Pi^+_y z}_{\kij}\leq &\; C
		\big[
		\norm{z}_{\kij} + h_{j}\norm{z^+_{x,j}}_{I_i}
		\big],
		\\
		\label{2GR:inf:stb}
		\norm{\Phi z}_{L^\infty(\kij)}\leq &\; C \norm{z}_{L^\infty(\kij)},
		\\
		\label{2GR:app}
		\norm{z-\Phi z}_{L^m(\kij)}\leq &\;
		C \Big[h_{i}^{k+1}\norm{\partial_x^{k+1}z}_{L^m(\kij)}
		+h_{j}^{k+1}\norm{\partial_y^{k+1}z}_{L^m(\kij)}
		\Big],
		\end{align}
	\end{subequations}
	where  $\Phi\in\{\Pi^{-},\Pi_x^{+},\Pi_y^{+}\}$, $m \in \{2,\infty\}$ and $z_{i,j}^-=z(x_i^-,y_j^-)$.
\end{lemma}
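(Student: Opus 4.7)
The plan is to prove each of the stated inequalities by (i) exhibiting each of $\Pi^-$, $\Pi^+_x$, $\Pi^+_y$ as a tensor product of one-dimensional projections on $K_{ij}$, (ii) transporting the resulting one-dimensional inequalities to $K_{ij}$ by an affine change of variables, and (iii) combining them through an intermediate-projection splitting to produce the anisotropic bounds. Throughout I will write $\hat K=[0,1]^2$ for the reference square and use the affine map $F_{ij}(\hat x,\hat y)=(x_{i-1}+h_i\hat x,\,y_{j-1}+h_j\hat y)$.

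The first step is the structural lemma that $\Pi^-=\pi_x^-\otimes\pi_y^-$, $\Pi_x^+=\pi_x^+\otimes\pi_y$, $\Pi_y^+=\pi_x\otimes\pi_y^+$, where $\pi_x^-:H^1(I_i)\to\mathcal{P}^k(I_i)$ is the one-dimensional Gauss--Radau projection (orthogonal to $\mathcal{P}^{k-1}(I_i)$ plus a match at the right endpoint), $\pi_x^+$ is the mirror image matching the left endpoint, and $\pi_x$ is the one-dimensional $L^2$ projection onto $\mathcal{P}^k(I_i)$; the $y$-direction analogues are defined in the same way. A dimension count matches $(k+1)^2$ in every case, and one directly verifies that the tensor product satisfies the defining identities in (4.4a)--(4.4c); this algebraic identification is what allows anisotropic analysis.

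For the stability bounds, I transport $z$ to $\hat z=z\circ F_{ij}$. On $\hat K$ the reference projection $\hat\Phi$ is a fixed bounded linear operator on the finite-dimensional space $\mathcal{Q}^k(\hat K)$, so all norms are equivalent. The right-hand side of the $\Pi^-$ estimate is built from the four groups of degrees of freedom (interior $L^2$, two edge $L^2$ traces, one corner value); pulling the corresponding reference-element inequality
\[
\|\hat\Pi^- \hat z\|_{\hat K}\le C\bigl[\|\hat z\|_{\hat K}+\|\hat z(\cdot,1)\|_{\hat I}+\|\hat z(1,\cdot)\|_{\hat J}+|\hat z(1,1)|\bigr]
\]
back through $F_{ij}$ produces precisely the weights $1$, $h_j^{1/2}$, $h_i^{1/2}$, $(h_ih_j)^{1/2}$ on the $L^2(\hat K)$, $L^2(\hat I)$, $L^2(\hat J)$, and pointwise contributions, and a further multiplication by $h_j^{1/2}$, $h_i^{1/2}$, $(h_ih_j)^{1/2}$ on the other side restores the traces; this reproduces (4.5a). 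The simpler boundary structures of $\Pi_x^+$ and $\Pi_y^+$ give (4.5b)--(4.5c), and the $L^\infty$ bound (4.5d) is immediate from norm equivalence on $\mathcal{Q}^k(\hat K)$ together with the scale-invariance of $L^\infty$.

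For the anisotropic approximation inequality (4.5e), I use the tensor-product factorization to split
\[
z-\Phi z=(I-\Phi^{(x)})z+\Phi^{(x)}(z-\Phi^{(y)}z),
\]
where $\Phi^{(x)},\Phi^{(y)}$ are the one-dimensional factors acting in $x$ and $y$ respectively. A one-dimensional Bramble--Hilbert argument on the interval $I_i$, applied pointwise in $y$ and then integrated, gives
\[
\|(I-\Phi^{(x)})z\|_{L^m(K_{ij})}\le C\,h_i^{k+1}\,\|\partial_x^{k+1}z\|_{L^m(K_{ij})};
\]
combining the $L^m$-stability of $\Phi^{(x)}$ (from step (ii)) with the analogous one-dimensional estimate in $y$ bounds the second term by $C h_j^{k+1}\|\partial_y^{k+1}z\|_{L^m(K_{ij})}$. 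Adding these two contributions yields (4.5e) for $m\in\{2,\infty\}$.

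The one genuinely delicate point is the verification of the tensor-product identification for $\Pi^-$: the defining conditions mix $L^2$-orthogonality over $K_{ij}$ with one-dimensional edge traces and a single corner value, and one must check that no extra degree of freedom is created or lost. Once this structural step is secured, the remaining estimates reduce to the standard anisotropic finite-element machinery of Apel and to the one-dimensional Gauss--Radau estimates already used in \cite{Cheng2020,Zhu:2dMC}.
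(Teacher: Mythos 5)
The paper does not actually prove this lemma --- it is stated with citations to Apel, Zhu--Zhang and Cheng et al.\ and no proof follows --- so there is no in-paper argument to compare against. Your outline (tensor-product factorisation of the three projections, reference-element scaling for the stability bounds, and the splitting $z-\Phi z=(I-\Phi^{(x)})z+\Phi^{(x)}(I-\Phi^{(y)})z$ for the approximation bound) is exactly the route those references take, and your structural identification of $\Pi^-$, $\Pi_x^+$, $\Pi_y^+$ as tensor products of one-dimensional Gauss--Radau and $L^2$ projections is correct: the dimension count $(k+1)^2$ and the unisolvence of the defining conditions go through as you say.

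Two points need attention. First, your scaling argument gives the trace terms in \eqref{2GR:2:stb} with weights $h_i^{1/2}$, $h_j^{1/2}$, $(h_ih_j)^{1/2}$ rather than the full powers $h_i$, $h_j$, $h_ih_j$ printed in the lemma. The full-power version is in fact false (test it on a function concentrated near the corner $(x_i,y_j)$, whose corner cardinal function has $L^2$ norm of order $(h_ih_j)^{1/2}$), and the paper's own later use of the bound --- the displays controlling $\Lambda_1$, $\Gamma_2$ and $\Gamma_3^{(1)}$ in the proof of Lemma \ref{lemma:GR}, where the squared trace terms carry a single factor of $h$ --- is consistent with your $h^{1/2}$ version; so the printed exponents are a typo and your derivation yields the estimate actually needed. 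Second, and more substantively, the cross term $\Phi^{(x)}(z-\Phi^{(y)}z)$ in your proof of \eqref{2GR:app} cannot be dispatched by ``$L^m$-stability of $\Phi^{(x)}$'' when $m=2$ and $\Phi^{(x)}$ is a Gauss--Radau factor: a one-dimensional Gauss--Radau projection contains a point evaluation and is not bounded on $L^2(I_i)$. For $\Pi_x^+$ and $\Pi_y^+$ you can dodge this by ordering the splitting so that the outer factor is the plain one-dimensional $L^2$ projection, which is $L^2$-stable; for $\Pi^-$ both factors are Gauss--Radau and no ordering helps. One must instead invoke the trace-augmented stability $\|\pi_x^-w\|_{I_i}^2\le C\big[\|w\|_{I_i}^2+h_i|w(x_i^-)|^2\big]$ and then estimate $(I-\pi_y^-)z$ on the edge $x=x_i$, which brings in either a one-dimensional trace inequality (and hence a mixed derivative $\partial_x\partial_y^{k+1}z$ on the right-hand side) or an $L^\infty$ argument. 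That step, not the tensor-product identification you single out, is the genuinely delicate point of the lemma and is where the anisotropic analysis in the cited references does its real work.
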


\begin{lemma}\label{lemma:GR}
	Let $\sigma\geq k+1.5$. Then, there exists a constant $C>0$  independent of $\varepsilon$ and $N$ such that
	\begin{subequations}\label{2dGR:property}
		\begin{align}
		\label{app:u:2}
		\norm{u-\Pi^- u}
		&\; \leq C\big[
		(\sfq+\sparep)(N^{-1}\max|\ppri|)^{k+1}+N^{-(k+1)}
		\big],
		\\
		\label{app:u:boundary}
		\sum_{j=1}^{N}
		\norm{(u-\Pi^- u)_{N,y}^{-}}^2_{J_j}
		&\;\leq 
		C\big[
		(\ssq+\parep)(N^{-1}\max|\ppri|)^{2(k+1)}+N^{-2(k+1)}
		\big],
		\\
		\label{app:pq:2}
		\invssq\norm{p-\Pi_x^+ p}
		&\;
		\leq C[(\sfq+\sparep)(N^{-1}\max|\psi^{\prime}|)^{k+1}
		+ N^{-(k+1)}],
		\\
		\label{app:pq:boundary}
		\sum_{j=1}^{N} \sq^{-1}
		\norm{(p-\Pi_x^+ p)_{N,y}^{-}}^2_{J_j}
		&\;\leq C (N^{-1}\max|\psi^{\prime}|)^{2(k+1)},
		\end{align}
	\end{subequations}
	where $\parep$ is given by \eqref{def:qstar}.	
	Similarly, we can obtain the same conclusions in another spatial direction. 
\end{lemma}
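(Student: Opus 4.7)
The plan is to mirror the architecture of the proof of Lemma \ref{lemma:L2}, adapting it to the Gauss--Radau projections: decompose the solution via Proposition \ref{thm:reg:2d}, bound each component with the stability/approximation estimates of Lemma \ref{2GR:inequality}, and split the sum into the fine layer sub-regions versus the coarse interior, using Lemma \ref{lemma:1} to control sums of $\Theta_i$ and Lemma \ref{lem:h:min:max} to control $h_{ly}$. Write $\eta_u = u - \Pi^{-}u = \eta_S + \sum_{k=1}^4 \eta_{W_k} + \sum_{k=1}^4 \eta_{Z_k}$, and similarly $\eta_p = p - \Pi_x^{+}p$, where $p = \varepsilon u_x$ so the decomposition of $p$ inherits one extra $\ssq$ factor from each derivative acting on an exponential layer.

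For \eqref{app:u:2}, the smooth term $\eta_S$ contributes $CN^{-(k+1)}$ via the $L^2$-approximation \eqref{2GR:app} together with $h_i,h_j \leq CN^{-1}$. For the boundary layer component $W_1$, I would split $\|\eta_{W_1}\|^2$ into cells of $\Omega_{XL}$ (fine in $x$) and $\Omega_{XM}\cup\Omega_{XR}$ (coarse in $x$), and on each fine cell use the minimum between the stability bound $\|\eta_{W_1}\|_{K_{ij}} \leq C\|W_1\|_{K_{ij}}$ and the approximation bound proportional to $(h_i/\ssq)^{2(k+1)}+h_j^{2(k+1)}$, exactly as in \eqref{lambda:stb}--\eqref{lambda:1}. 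Lemma \ref{lemma:1} then reproduces the $\sfq(N^{-1}\max|\ppri|)^{k+1}$ contribution, while the coarse region contributes $\sfq N^{-\sigma}\leq \sfq(N^{-1}\max|\ppri|)^{k+1}$ under $\sigma\geq k+1.5$. The corner layer $Z_1$ is estimated by a tensor-product version of the same argument (cf.\ \eqref{xi:1}). The additional $\sparep$ term that distinguishes the Gauss--Radau case from the $L^2$ case arises from the boundary-trace terms $h_j\|z^-_{x,j}\|_{I_i}+h_i\|z^-_{i,y}\|_{J_j}+h_ih_j|z_{i,j}^-|$ appearing in the stability bound \eqref{2GR:2:stb}: for layer components whose traces at the outflow edges are $O(1)$ — notably the layer $W_2$ at $x=1$ when evaluating $\Pi^-$, which pins the value at the right endpoint — the contribution of the boundary column of cells is controlled by $h_{ly}\leq C\parep$ from \eqref{def:qstar}, producing the $\sparep(N^{-1}\max|\ppri|)^{k+1}$ summand.

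For the right-edge bound \eqref{app:u:boundary} I pass to the $L^\infty$ approximation \eqref{2GR:app} with $m=\infty$ cell-by-cell on $K_{N,j}$, multiply by the edge length $h_j$, and sum in $j$ using the tensor-product analogue of Lemma \ref{lemma:1}. The factor $\ssq$ comes from the exponential decay of the layer at $x=0$ contribution at $x=1$, while $\parep$ again records the width $h_{ly}$ of the single boundary column carrying the $O(1)$ trace of the $x=1$ layer. The $p$-estimates \eqref{app:pq:2}--\eqref{app:pq:boundary} follow the same template: since $\partial_x^i \partial_y^j W_1 \leq C\sq^{1-i/2}e^{-\beta x/\ssq}$ in the representation of $p$, every bound picks up an extra $\ssq$, and multiplying by the prefactor $\invssq$ (respectively $\sq^{-1/2}$ inside the sum) restores the same functional form as for $u$; the projection $\Pi_x^{+}$ has only the single one-sided trace term $h_i\|z^+_{i,y}\|_{J_j}$ in its stability, which is handled as before on the leftmost column.

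The main obstacle is the careful bookkeeping of the boundary-trace contributions in \eqref{2GR:2:stb} that are absent in the $L^2$ case and are responsible for the new $\sparep$ and $\parep$ terms. Specifically, one must ensure $\varepsilon$-uniformity while tracking which edges lie in which layer sub-region, which corner cells carry non-negligible $|z^-_{i,j}|$, and how the widths $h_{ly}$ interact with the exponential decay; a sloppy bound would replace $\sparep$ by something larger (e.g.\ $\sfq$ with an extra $\ln N$) and destroy the optimal $k+1$ order. Once this is done, the remaining inequalities follow by the same arguments applied in the transverse direction, and are therefore omitted as stated.
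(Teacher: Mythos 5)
Your treatment of \eqref{app:u:2} and \eqref{app:pq:2} follows the paper's route: the same solution decomposition, the same minimum of the stability and approximation bounds on the fine columns, and you correctly locate the new $\sparep$ term in the trace terms of the Gauss--Radau stability bound \eqref{2GR:2:stb} applied to the layer at $x=1$ (the paper's $W_3$), which is increasing in $x$ so that its right-edge traces cannot be absorbed into the cell $L^2$ norms. The paper isolates the two exceptional columns $i\in\{3N/4+1,\,N\}$ --- where the mesh-ratio properties \eqref{mesh:property:S}--\eqref{mesh:property:B} do not allow the trace to be shifted into a neighbouring cell --- and bounds them in $L^\infty$ times $h_i\le C\parep$ from \eqref{def:qstar}, essentially as you describe.

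The gap is in your argument for \eqref{app:u:boundary}. Bounding $\norm{(u-\Pi^-u)^-_{N,y}}^2_{J_j}$ by $h_j\norm{u-\Pi^-u}^2_{L^\infty(K_{N,j})}$ cannot produce the stated estimate: on the last column the $x=1$ layer gives $\norm{u-\Pi^-u}_{L^\infty(K_{N,j})}\approx (h_N/\ssq)^{k+1}\approx(N^{-1}\max|\ppri|)^{k+1}$ with no small prefactor (its $L^\infty$ norm there is $O(1)$ and $\partial_x^{k+1}$ costs $\sq^{-(k+1)/2}$), so after summing $\sum_j h_j=1$ you get $C(N^{-1}\max|\ppri|)^{2(k+1)}$ and the factor $(\ssq+\parep)$ is lost. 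The paper's argument is structurally different: by the edge conditions in \eqref{global:projection:2d:u}, the trace of $\Pi^-u$ on $x=x_N^-$ coincides with the one-dimensional Gauss--Radau projection $\pi_y^-$ of the trace $u_{N,y}$, so $(u-\Pi^-u)^-_{N,y}=u_{N,y}-\pi_y^-(u_{N,y})$ is a purely one-dimensional projection error in $y$. Restricting to $x=1$ removes the $x$-layers entirely (they become bounded smooth functions of $y$ and contribute only $N^{-2(k+1)}$), and the $(\ssq+\parep)(N^{-1}\max|\ppri|)^{2(k+1)}$ term comes solely from the $y$-layers of $u(1,\cdot)$ via the one-dimensional analogues of the estimates you already set up. Without this trace-reproduction observation your proof establishes a strictly weaker inequality than the lemma claims (it would still be enough for Theorem \ref{thm:energy}, where \eqref{app:u:boundary} is multiplied by $\lambda_{N,y}=\ssq$, but it does not prove the lemma as stated).
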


\begin{proof}
	The conclusions are more precise than Lemma 4.1 of \cite{Cheng2020}.
	The proof proceeds similarly to Lemma \ref{lemma:L2}.
	We mention several differences and use the same notations
	to prevent confusion.

	(1) Prove \eqref{app:u:2}.	
	As before, we have
	\begin{align}\label{eta:0}
	\norm{\eta_{S}} \leq C N^{-(k+1)}.
	\end{align}
	
	To bound $\norm{\eta_{W_1}}$, we express it as \eqref{w1}.
	Using \eqref{2GR:2:stb}
	and \eqref{2GR:app} with $m=2$, we obtain the two estimates
	\begin{align}\label{g11}
	\Lambda_{1}\leq &\; C\sum_{\kij \in \xl}
	\Big[ \norm{W_1}_{\kij}^2 + h_{j}\norm{(W_1)^-_{x,j}}_{I_i}^2
	+h_{i}\norm{(W_1)^-_{i,y}}_{J_j}^2 + h_{i}h_{j}|(W_1)^-_{i,j}|^2 \Big]
	\nonumber\\
	\leq &\; C\sum_{\kij \in \xl}\norm{e^{-\frac{\beta x}{\ssq}}}_{\kij}^2,
	\\
	\label{g12}
	\Lambda_{1}\leq &\; C\sum_{\kij \in \xl}
	\Big[
	h_i^{2(k+1)} \norm{\dx^{k+1}W_1}_{\kij}^2+
	h_j^{2(k+1)} \norm{\dy^{k+1}W_1}_{\kij}^2
	\Big]
	\nonumber\\
	\leq &\; C \sum_{\kij \in \xl}
	\Big[
	\Big(\frac{h_i}{\ssq}\Big)^{2(k+1)}+h_j^{2(k+1)}
	\Big]
	\norm{e^{-\frac{\beta x}{\ssq}}}_{\kij}^2,
	\end{align}
	respectively, where we used \eqref{reg:u:2d} and the monotonic decreasing property of the function $e^{-\beta x/\ssq}$. 
	Consequently, we obtain the same estimate
	for $\Lambda_{1}$ as before.
	For $\Lambda_2$, we use the stability \eqref{2GR:2:stb}
	and \eqref{reg:u:2d} to obtain
	\begin{align} \label{A2}
	\Lambda_2= &\;
	C\sum_{\kij \in \xm\cup\xr}
	\Big[ \norm{W_1}_{\kij}^2 + h_{j}\norm{(W_1)^-_{x,j}}_{I_i}^2
	+h_{i}\norm{(W_1)^-_{i,y}}_{J_j}^2 + h_{i}h_{j}|(W_1)^-_{i,j}|^2 \Big]
	\nonumber\\
	\leq &\;
	C\sum_{\kij \in \xm\cup\xr}\norm{e^{-\frac{\beta x}{\ssq}}}_{\kij}^2
	=
	C\int_{0}^{1}dy\int_{\tau}^{1} e^{-\frac{2\beta x}{\ssq} } dx
	\leq  C\ssq e^{-\frac{2\beta \tau}{\ssq}}
	\leq C\ssq N^{-2\sigma}.
	\end{align}	
	As a result, we have
	\begin{align}\label{eta:1}
	\norm{\eta_{W_1}}
	\leq C\sfq[(N^{-1}\max|\psi^{\prime}|)^{k+1}+N^{-\sigma}].
	\end{align}
	
	The term $\norm{\eta_{W_3}}$ must be treated carefully.
	We decompose it as
	\[
	\norm{\eta_{W_3}}^2
	=\sum_{\kij \in \xl} \norm{\eta_{W_3}}_{\kij}^2
	+\sum_{\kij \in \xm} \norm{\eta_{W_3}}_{\kij}^2
	+\sum_{\kij \in \xr}\norm{\eta_{W_3}}_{\kij}^2
	\equiv \Gamma_1 + \Gamma_2 + \Gamma_3.
	\]
	
	Using $L^\infty$-stability and \eqref{reg:u:2d}, we have
	\begin{align}\label{B1}
	\Gamma_1 \leq &\; 
	C\sum_{\kij \in \xl} 
	h_ih_j \norm{W_3}^2_{L^\infty(\kij)} 
	\leq  C\norm{W_3}^2_{L^\infty(\xl)} 
	\sum_{i=1}^{\ofh} h_i
	\sum_{j=1}^{N} h_j
	\nonumber\\
	\leq &\;
	C  \norm{W_3}^2_{L^\infty(\xl)} 
	\leq C e^{-\frac{2\beta (1-\tau)}{\ssq}}
	\leq C\ssq e^{-\frac{2\beta\tau}{\ssq}}
	\leq C\ssq N^{-2\sigma},
	\end{align}
	because $2(1-\tau)\geq 1+2\tau$ for $0\leq \tau\leq 1/4$,
	and $e^{-x}<x^{-1}$ for $x\geq 1$.

	Using the $L^2$-stability, the uniformity of the mesh in $\xm$, and
	Proposition \ref{thm:reg:2d}, we have
	\begin{align} \label{B2}
	\Gamma_2 \leq &\;
	C\sum_{\kij \in \xm}
	\Big[ \norm{W_3}^2_{\kij} + h_{j}\norm{(W_3)^-_{x,j}}^2_{I_i}
	+h_{i}\norm{(W_3)^-_{i,y}}^2_{J_j} + h_{i}h_{j}|(W_3)_{i,j}^-|^2 \Big]
	\nonumber\\
	\leq &\;  C\sum_{\kij \in \xm}\norm{e^{-\frac{\beta x}{\ssq}}}_{\kij}^2
	+ C\sum_{j=1}^{N} h_j h_{3N/4} e^{-\frac{2\beta(1-x_{3N/4})}{\ssq}}
	\nonumber\\
	\leq &\;
	C\ssq e^{-\frac{2\beta \tau}{\ssq}}
	+N^{-1} e^{-\frac{2\beta\tau}{\ssq}}
	\leq  C(\ssq+N^{-1}) N^{-2\sigma}.
	\end{align}

	To bound $B_3$, we decompose it into two parts:
	\begin{align} \label{B3:decompose}
	\Gamma_3	= \sum_{i=\tfh+2}^{N-1} \sum_{j=1}^{N} \norm{\eta_{W_3}}^2_{\kij}
	+\sum_{i\in\{\tfh+1,N\}} \sum_{j=1}^{N}
	\norm{\eta_{W_3}}^2_{\kij}
	\equiv \Gamma_3^{(1)} + \Gamma_3^{(2)}.
	\end{align}
	From \eqref{2GR:2:stb}, \eqref{2GR:inf:stb}, and Lemma \ref{lem:h:min:max}, 
	we have
	\begin{align} \label{B3:stb:1}
	\Gamma_3^{(1)} \leq &\;C\sum_{i=\tfh+2}^{N-1} \sum_{j=1}^{N}
	\Big[ \norm{W_3}^2_{\kij} + h_{j}\norm{(W_3)^-_{x,j}}^2_{I_i}
	+h_{i}\norm{(W_3)^-_{i,y}}^2_{J_j} + h_{i}h_{j}|(W_3)^-_{i,j}|^2\Big]
	\nonumber\\
	\leq &\; 
	C\sum_{\kij \in \xr}
	\norm{e^{-\frac{2\beta (1-x)}{\ssq} }}^2_{\kij}.
	\end{align}
	Using \eqref{2GR:app} with $m=2$ yields
	\begin{align}   \label{B3:app:1}
	\Gamma_{3}^{(1)}\leq &\; C\sum_{\kij \in \xr}
	\Big[
	h_i^{2(k+1)} \norm{\dx^{k+1}W_3}_{\kij}^2+
	h_j^{2(k+1)} \norm{\dy^{k+1}W_3}_{\kij}^2
	\Big]
	\nonumber\\
	\leq &\; C \sum_{\kij \in \xr}
	\Big[
	\Big(\frac{h_i}{\ssq}\Big)^{2(k+1)}+h_j^{2(k+1)}
	\Big]
	\norm{e^{-\frac{\beta (1-x)}{\ssq}}}_{\kij}^2.
	\end{align}	
	Then, combining \eqref{B3:stb:1} and \eqref{B3:app:1}, we obtain 
	\begin{align}
	\Gamma_3^{(1)} \label{B3:1}
	\leq &\; C\sum_{i=\frac{3N}{4}+1}^{N}
	\min\Big\{\Big(\frac{h_i}{\ssq}\Big)^{2(k+1)},1\Big\}
	\ssq e^{-\frac{2\beta (1-x_{i})}{\ssq}}
	\min\Big\{\frac{h_i}{\ssq},1\Big\}
	\nonumber\\
	\leq &\;
	C\ssq(N^{-1}\max|\ppri|)^{2(k+1)},
	\end{align}
	as before.
	
	In a similar manner to \eqref{w1:0}, we have
	\begin{align*}
	\norm{\eta_{W_3}}_{L^\infty(\kij)}^2 
	\leq&\; C \Theta_i^{2(k+1)}
	\leq C (N^{-1}\max|\ppri|)^{2(k+1)}.
	\end{align*}
	Because $h_{i} \leq  C \parep$ for $i=3N/4,...,N$, we have
	\begin{align} \label{B3:2}
	\Gamma_3^{(2)}  &\;
	\leq
	\sum_{i\in\{\tfh+1,N\}} \sum_{j=1}^{N}
	h_ih_j \norm{\eta_{W_3}}^2_{L^\infty(\kij)}
	\leq 
	C \parep (N^{-1}\max|\ppri|)^{2(k+1)}.
	\end{align}
	
	Combining \eqref{B3:1} with \eqref{B3:2} leads to
	\begin{align} \label{B3}
	\Gamma_3 \leq C (\ssq+\parep)(N^{-1}\max|\ppri|)^{2(k+1)}.
	\end{align}
	
	Collecting up \eqref{B1}, \eqref{B2}, and \eqref{B3} yields
	\begin{align}\label{eta:2}
	\norm{\eta_{W_3}}
	\leq C \Big[(\sfq+\sparep) (N^{-1}\max|\ppri|)^{k+1}+ (\sfq+ N^{-1/2}) N^{-(k+1)}\Big].
	\end{align}
	Similarly, we can prove the remainder of \eqref{eta:L2norm} and arrive at \eqref{app:u:2}.

	(2) 
	Note that $(\eta_u)_{N,y}^{-}=u_{N,y}-\pi_y^{-}(u_{N,y})$,
	where $\pi_y^{-}$ is a one-dimensional Gauss--Radau projection
	regarding $y$ and 
	satisfies analogous stability and approximation conditions to that in Lemma \ref{2GR:inequality}. From the solution decomposition,
	we express  $u_{N,y} = S_{N,y} +E_{N,y}+F_{N,y}$,
	where $S_{N,y},E_{N,y}$ and $F_{N,y}$ are functions
	of one variable $y$ and satisfy 
	$|S_{N,y}^{(j)}|\leq C$,
	$| E_{N,y}^{(j)}|\leq C\sq^{-j/2} e^{-\beta y/\ssq}$, and
	$| F_{N,y}^{(j)}|\leq C\sq^{-j/2} e^{-\beta (1-y)/\ssq}$.
	Following the similar line to that used to prove \eqref{2GR:2:stb},
	we obtain
	\begin{align*}
	\sum_{j=1}^{N}\norm{(\eta_{S})_{N,y}^{-}}_{J_j}^2
	&\; \leq N^{-2(k+1)},
	\\
	\sum_{j=1}^{N}\norm{(\eta_{E})_{N,y}^{-}}_{J_j}^2
	&\; \leq C\ssq[(N^{-1}\max|\psi^{\prime}|)^{2(k+1)}+N^{-2\sigma}],
	\\
	\sum_{j=1}^{N}\norm{(\eta_{F})_{N,y}^{-}}_{J_j}^2
	&\; \leq  
	C \Big[(\ssq+\parep) (N^{-1}\max|\ppri|)^{2(k+1)}+ (\ssq+ N^{-1}) N^{-2(k+1)}\Big].
	\end{align*}
	Using the triangle inequality leads to \eqref{app:u:boundary}.
	
	The proofs of \eqref{app:pq:2} and \eqref{app:pq:boundary} are similar and therefore omitted.
	
\end{proof}

\begin{theorem} \label{thm:energy}
	Suppose that $\lambda_{i,y}=\lambda_{x,j}=0$ for $i,j=0,1,\cdots,N-1 $, $\lambda_{N,y}=\lambda_{x,N}=\ssq$.
	Let $\bm w=(u,p,q)$ be the solution to problem \eqref{spp:R-D}, which
	satisfies Proposition \ref{thm:reg:2d}; furthermore,
	let $\wN=(\uN,\pN,\qN)\in \spc^3$ be the numerical solution of
	the LDG scheme \eqref{LDG:scheme:2d} on layer-adapted meshes \eqref{layer-adapted} with $\sigma\geq k+1.5$.
	Then, there exists a constant $C>0$ independent of $\sq$ and $N$ such that
	\begin{align}\label{error:energy:general}
	\enorm{\bm w-\wN} \leq
	\begin{dcases}
	C\Big[\sfq(N^{-1}\ln N)^{k+1}+N^{-(k+1)}\Big], & \textrm{for S-mesh},\\
	CN^{-(k+1)}, & \textrm{for BS-,B-type mesh}.
	\end{dcases}
	\end{align}
\end{theorem}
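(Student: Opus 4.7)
I will follow the standard LDG framework and decompose the error as $\bm w-\wN=\bm\eta-\bm\xi$ with the Gauss--Radau projections tailored to the alternating fluxes \eqref{flux:diffusion}:
\[
\bm\eta=(u-\Pi^- u,\;p-\Pi_x^+ p,\;q-\Pi_y^+ q),\qquad \bm\xi=(\uN-\Pi^- u,\;\pN-\Pi_x^+ p,\;\qN-\Pi_y^+ q)\in\spc^3.
\]
Consistency of the numerical fluxes and of the outflow penalty gives the Galerkin orthogonality $\Bln{\bm\xi}{\vph}=\Bln{\bm\eta}{\vph}$ for all $\vph\in\spc^3$. Choosing $\vph=\bm\xi$ leads to $\enorm{\bm\xi}^2=\Bln{\bm\eta}{\bm\xi}$, and the task reduces to bounding the four blocks $\Ton{\bm\eta}{\bm\xi}$, $\Ttw{\eta_u}{\bm\xi}$, $\Tth{\bm\eta}{\xi_u}$, $\Tfr{\eta_u}{\xi_u}$ separately.

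\textbf{Easy blocks.} For $\Ton{\bm\eta}{\bm\xi}$ a direct Cauchy--Schwarz with \eqref{app:u:2} and \eqref{app:pq:2} already produces the right-hand side of \eqref{error:energy:general}: on S-meshes $\sparep=\sfq$ and $\max|\ppri|=C\ln N$, while on BS- and B-meshes the factor $(\sfq+\sparep)(N^{-1}\max|\ppri|)^{k+1}$ is already $O(N^{-(k+1)})$. For $\Tfr{\eta_u}{\xi_u}$ the penalties are nonzero only on the outflow edges $\{x=1\}$ and $\{y=1\}$, so Cauchy--Schwarz together with \eqref{app:u:boundary} and $\lambda_{N,y}=\lambda_{x,N}=\ssq$ yields a bound of the same order.

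\textbf{Key cancellations in $\Tth$.} The decisive gain from choosing $\Pi_x^+$ and $\Pi_y^+$ is that the volume part $\dual{\eta_p}{(\xi_u)_x}$ vanishes, since $(\xi_u)_x\in\mathcal{P}^{k-1}(I_i)\otimes\mathcal{P}^k(J_j)$ is precisely the space against which $\eta_p$ is orthogonal by \eqref{global:projection:2d:q1}. Every interior-edge contribution $\dual{(\eta_p)^+_{i,y}}{\jump{\xi_u}_{i,y}}_{J_j}$ vanishes for the analogous reason: $(\eta_p)^+_{i,y}$ is $L^2(J_j)$-orthogonal to $\mathcal{P}^k(J_j)$ by the edge-matching condition of $\Pi_x^+$. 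Only the outflow pieces $-\sum_j\dual{(\eta_p)^-_{N,y}}{\xi_u^-_{N,y}}_{J_j}$ and the $y$-analogue remain, which I control by Cauchy--Schwarz together with \eqref{app:pq:boundary} and the identity $\sum_j\sq\norm{\xi_u^-_{N,y}}^2_{J_j}=\ssq\sum_j\lambda_{N,y}\norm{\jump{\xi_u}_{N,y}}^2_{J_j}\leq\ssq\enorm{\bm\xi}^2$. This yields $C\sfq(N^{-1}\max|\ppri|)^{k+1}\enorm{\bm\xi}$, matching the first term of \eqref{error:energy:general} on S-meshes and being absorbed into $N^{-(k+1)}$ on BS- and B-meshes.

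\textbf{Main obstacle: $\Ttw{\eta_u}{\bm\xi}$.} This is the delicate step. Since $(\xi_p)_x\in\mathcal{P}^{k-1}(I_i)\otimes\mathcal{P}^k(J_j)$ while $\Pi^-$ is only orthogonal to $\mathcal{Q}^{k-1}(K)$ in the volume, the residual of $(\xi_p)_x$ is its component of degree exactly $k$ in $y$. My plan is to expand $\xi_p$ in the tensor Legendre basis on each $\kij$, pair the residual volume contribution with the interior edge sum $\sum_j\sum_{i=1}^{N-1}\dual{(\eta_u)^-_{i,y}}{\jump{\xi_p}_{i,y}}_{J_j}$, and exploit the remaining orthogonalities of $\Pi^-$, namely $(\eta_u)^-_{i,y}\perp\mathcal{P}^{k-1}(J_j)$, $(\eta_u)^-_{x,j}\perp\mathcal{P}^{k-1}(I_i)$, and the corner matching $(\eta_u)(x_i^-,y_j^-)=0$. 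Only components paired against the single Legendre polynomial of degree $k$ survive, and these are handled by Cauchy--Schwarz, the trace/inverse inequality, and Lemma \ref{lemma:GR}. The mismatch between $\mathcal{Q}^{k-1}$ and $\mathcal{P}^{k-1}(I_i)\otimes\mathcal{P}^k(J_j)$ prevents a one-shot cancellation as in $\Tth$ and is the technical crux of the argument. Collecting the four bounds produces $\enorm{\bm\xi}\le C\Xi$ with $\Xi$ the right-hand side of \eqref{error:energy:general}; a triangle inequality $\enorm{\bm w-\wN}\le\enorm{\bm\eta}+\enorm{\bm\xi}$ together with Lemma \ref{lemma:GR} then closes the proof.
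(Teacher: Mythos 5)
Your overall strategy is the paper's: the same projection triple $(\Pi^-,\Pi_x^+,\Pi_y^+)$, the same error equation $\enorm{\bm\xi}^2=\Bln{\bm\eta}{\bm\xi}$, and the same treatment of $\Ton{\bm\eta}{\bm\xi}$, $\Tth{\bm\eta}{\xi_u}$ (cancellation of the volume and interior-edge terms, leaving only the outflow traces controlled via \eqref{app:pq:boundary} and $\lambda_{N,y}=\lambda_{x,N}=\ssq$) and $\Tfr{\eta_u}{\xi_u}$. Those three blocks are handled correctly.

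The gap is in $\Ttw{\eta_u}{\bm\xi}$, which you yourself flag as the crux but do not actually resolve. Your plan is to isolate the surviving degree-$k$ Legendre components and then finish with ``Cauchy--Schwarz, the trace/inverse inequality, and Lemma \ref{lemma:GR}.'' That cannot close at order $k+1$: any inverse or trace inequality applied to $\xi_p$ on $K_{ij}$ costs a factor $h_i^{-1}$ (or $h_i^{-1/2}$), and in the layer region $h_i$ can be as small as $C\ssq N^{-1}\max|\ppri|$, while Lemma \ref{lemma:GR} only supplies approximation bounds of order $h^{k+1}$. The quantity you need, and which the paper imports from \cite{Cheng2020} as \eqref{2d:sup:1}--\eqref{2d:sup:2}, is a genuinely \emph{superconvergent} bound on the combined local form
\begin{equation*}
\mathcal{A}^1_{ij}(\eta_u,v)=\dual{\eta_u}{v_x}_{K_{ij}}
-\dual{(\eta_u)^{-}_{i,y}}{v^{-}_{i,y}}_{J_{j}}
+\dual{(\eta_u)^{-}_{i-1,y}}{v^{+}_{i-1,y}}_{J_{j}},
\end{equation*}
namely $|\mathcal{A}^1_{ij}(\eta_u,v)|\le C\sqrt{h_j/h_i}\,[h_i^{k+2}\norm{\partial_x^{k+2}u}_{L^\infty}+h_j^{k+2}\norm{\partial_y^{k+2}u}_{L^\infty}]\norm{v}_{K_{ij}}$ together with the stability variant $C\sqrt{h_j/h_i}\norm{u}_{L^\infty}\norm{v}_{K_{ij}}$: note the extra power of $h$ ($k+2$ rather than $k+1$) and the need for $k+2$ derivatives of $u$, which is exactly why Proposition \ref{thm:reg:2d} is stated up to order $k+2$. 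Your Legendre-expansion idea is the standard mechanism behind such an estimate, so the approach is salvageable, but as written the decisive inequality is asserted rather than proved, and the tools you name in its place would lose one order of convergence. The remaining region-by-region summation (using $h_i\ge C\ssq N^{-1}\max|\ppri|$ on the coarse part and Lemma \ref{lemma:1} in the layers) also needs to be carried out to extract the $\sfq$ prefactor.
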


\begin{proof}
	We follow the proof of Theorem \ref{thm:balanced}.
	Instead of the projection $(\Pi_b,\Pi,\Pi)$ for $u,p$ and $q$,
	we use $(\Pi^-,\Pi_x^+,\Pi_y^+)$
	in \eqref{error:decomposition}.
	
	Using the Cauchy--Schwarz inequality, \eqref{app:u:2}, and \eqref{app:pq:2}, we obtain
	\begin{align} \label{appr:t1}
	\Ton{\bm \eta}{\vph}
	\leq &\;
	C\big(\varepsilon^{-1/2}\norm{\eta_p}+\varepsilon^{-1/2}\norm{\eta_q}
	+\norm{b^{1/2}}_{L^\infty(\Omega_N)}\norm{\eta_u}\big)\enorm{\vph}
	\nonumber\\
	\leq &\;
	C\Big[
	(\sfq+\sparep)(N^{-1}\max|\ppri|)^{k+1}+N^{-(k+1)}
	\Big]\enorm{\vph}.
	\end{align}
	
	From \eqref{global:projection:2d:q1},
	\eqref{global:projection:2d:q2},
	\eqref{app:pq:boundary}, $\lambda_{N,y}=\lambda_{x,N}=\ssq$, and the Cauchy--Schwarz inequality, we obtain
	\begin{align}\label{appr:t3} 
	\Tth{\bm \eta}{\vphu}
	=&\;
	-\sum_{j=1}^{N}
	\dual{(\eta_p)^{-}_{N,y}}{\vphu^{-}_{N,y}}_{J_j}
	-\sum_{i=1}^{N}
	\dual{(\eta_q)^{-}_{x,N}}{\vphu^{-}_{x,N}}_{I_i}
	\nonumber\\
	\leq &\; 
	\Big[
	\sum_{j=1}^{N}\frac{1}{\lambda_{N,y}}\norm{(\eta_p)_{N,y}^{-}}^2_{J_j}
	+\sum_{i=1}^{N}\frac{1}{\lambda_{x,N}}\norm{(\eta_q)_{x,N}^{-}}^2_{I_i}
	\Big]^{1/2} \enorm{\vph}
	\nonumber\\
	\leq &\;C\sfq(N^{-1}\max|\psi^{\prime}|)^{k+1} \enorm{\vph}.
	\end{align}  
	Similarly, we have
	\begin{align} \label{appr:t4}
	\Tfr{\eta_u}{\vphu} = 
	&\;
	\sum_{j=1}^{N}
	\dual{\lambda_{N,y}(\eta_u)^{-}_{N,y}}{\vphu^{-}_{N,y}}_{J_j}
	+
	\sum_{i=1}^{N}
	\dual{\lambda_{x,N}(\eta_u)^{-}_{x,N}}{\vphu^{-}_{x,N}}_{I_i}
	\nonumber\\
	\leq &\;
	\Big[
	\sum_{j=1}^{N}
	\lambda_{N,y} \norm{(\eta_u)_{N,y}^{-}}^2_{J_j}
	+
	\sum_{i=1}^{N}
	\lambda_{x,N} \norm{(\eta_u)_{x,N}^{-}}^2_{I_i}
	\Big]^{1/2} \enorm{\vph}
	\nonumber\\
	\leq &\;
	C\sfq\Big[
	(\sfq+\sparep)(N^{-1}\max|\ppri|)^{k+1}+N^{-(k+1)}
	\Big] \enorm{\vph},
	\end{align}
	where \eqref{app:u:boundary} was used.

	To bound $\Ttw{\eta_u}{\vph}$,
	we follow \cite{Cheng2020} and investigate the $\sq^{1/4}$-factor in the upper-bound.
	On each element $K_{ij}$, we define the bilinear forms as
	\begin{align*}
	\mathcal{A}^1_{ij}(\eta_u,v)
	=&\;\dual{\eta_u}{v_x}_{K_{ij}}
	-\dual{(\eta_u)^{-}_{i,y}}{v^{-}_{i,y}}_{J_{j}}
	+\dual{(\eta_u)^{-}_{i-1,y}}{v^{+}_{i-1,y}}_{J_{j}},
	\\
	\mathcal{A}^2_{ij}(\eta_u,v)
	=&\;\dual{\eta_u}{v_y}_{K_{ij}}
	-\dual{(\eta_u)^{-}_{x,j}}{v^{-}_{x,j}}_{I_{i}}
	+\dual{(\eta_u)^{-}_{x,j-1}}{v^{+}_{x,j-1}}_{I_{i}}.
	\end{align*}	
	We have \cite{Cheng2020}
	\begin{subequations}\label{2d:sup}
		\begin{align}
		\label{2d:sup:1}
		|\mathcal{A}^1_{ij}(\eta_u,v)|\leq&
		C\sqrt{\frac{h_{j}}{h_{i}}}
		\Big[h_{i}^{k+2}
		\norm{\partial_x^{k+2}u}_{L^\infty(K_{ij})}
		+h_{j}^{k+2}\norm{\partial_y^{k+2}u}_{L^\infty(K_{ij})}
		\Big]\norm{v}_{K_{ij}},
		\\
		\label{2d:sup:2}
		|\mathcal{A}^1_{ij}(\eta_u,v)|\leq&
		C\sqrt{\frac{h_{j}}{h_{i}}}\norm{u}_{L^\infty(K_{ij})}\norm{v}_{K_{ij}}
		\end{align}		
	\end{subequations}
	for any $v\in \mathcal{Q}^k(K_{ij})$. Because $u=0$ on $\partial\Omega$, we obtain
	\begin{align*}
	\Ttw{\eta_u}{\vph}
	&\;=\sum_{K_{ij}\in \Omega_N}\mathcal{A}^1_{ij}(\eta_u,\vphp)
	+\sum_{K_{ij}\in \Omega_N}\mathcal{A}^2_{ij}(\eta_u,\vphq).
	\end{align*}
	
	By \eqref{2d:sup:1}, we have
	\begin{align*}
	&\sum_{K_{ij}\in \Omega_N}\mathcal{A}^1_{ij}(\eta_S,\vphp)
	\nonumber\\
	\leq&\; C\sum_{K_{ij}\in \Omega_N}\sqrt{\frac{h_{j}}{h_{i}}}
	\Big[h_{i}^{k+2}
	\norm{\partial_x^{k+2}S}_{L^\infty(K_{ij})}
	+h_{j}^{k+2}\norm{\partial_y^{k+2}S}_{L^\infty(K_{ij})}
	\Big]\norm{\vphp}_{K_{ij}}
	\\
	\leq&\; CN^{-(k+1)}\norm{\vphp}_{\Omega_{22}}
	+C\Bigg[\sum_{K_{ij}\in \Omega_N\setminus\Omega_{22}}
	(\ssq \max|\psi^{\prime}|)^{-1} N^{-2(k+2)}
	\Bigg]^{1/2}\norm{\vphp}_{\Omega\setminus\Omega_{22}}
	\\
	\leq&\; C\sfq N^{-(k+1)}\enorm{\vph},
	\end{align*}
	because $h_{i}\geq C\ssq N^{-1}\max|\psi^{\prime}|\geq C\ssq h_{j}\max|\psi^{\prime}|$
	and $\varepsilon^{-1/2}\norm{\vphp}\leq \enorm{\vph}$.
	Using \eqref{2d:sup:1} and $\sigma\geq k+2$, we obtain
	\begin{align*}
	\sum_{K_{ij}\in \xm\cup \xr}
	\mathcal{A}^1_{ij}(\eta_{W_1},\vphp)
	\leq&\; C\sum_{K_{ij}\in \xm\cup \xr}
	\sqrt{\frac{h_{j}}{h_{i}}}\norm{W_1}_{L^\infty(K_{ij})}\norm{\vphp}_{K_{ij}}
	\nonumber\\
	\leq&\; C\sum_{K_{ij}\in \xm\cup \xr}
	\sq^{-1/4} e^{-\frac{\beta x_i}{\ssq}}\norm{\vphp}_{K_{ij}}
	\nonumber\\
	\leq&\; C\sfq N^{-(k+1)}\enorm{\vph}.
	\end{align*}
	
	Using \eqref{2d:sup}, $\sigma\geq k+2$, and Lemma \ref{lemma:1} yields
	\begin{align*}
	&\sum_{K_{ij}\in \xl}\mathcal{A}^1_{ij}(\eta_{W_1},\vphp)
	\\
	\leq&\; C\sum_{K_{ij}\in \xl}
	\sqrt{\frac{h_{j}}{h_{i}}}
	\min\Big\{
	h_{i}^{k+2}\norm{\partial_x^{k+2}W_1}_{L^\infty(K_{ij})}
	+h_{j}^{k+2}\norm{\partial_y^{k+2}W_1}_{L^\infty(K_{ij})},
	\norm{W_1}_{L^\infty(K_{ij})}
	\Big\}\norm{\vphp}_{K_{ij}}
	\nonumber\\
	\leq &\;
	C\sum_{K_{ij}\in \xl}
	(\ssq \max|\psi^{\prime}|)^{-1/2}\Theta_i^{k+2}\norm{\vphp}_{K_{ij}}
	\nonumber\\
	\leq &\;
	C\sfq(\max|\psi^{\prime}|)^{-1/2}\Bigg(\sum_{j=1}^N
	\sum_{i=1}^{\ofh}\Theta_i\Bigg)^{1/2}
	\Big(\max_{1\leq i \leq \ofh}\Theta_i^{k+3/2}\Big)
	\Big(\sq^{-1/2}\norm{\vphp}\Big)
	\nonumber\\
	\leq&\; C\sfq(N^{-1}\max|\ppri|)^{k+1}\enorm{\vph}.
	\end{align*}
	Analogously, we can bound
	$\sum_{K_{ij}\in \Omega_{N}}\mathcal{A}^1_{ij}(\eta_\varphi,\vphp)$
	for $\varphi=W_2,W_3,W_4,Z_1,Z_2,Z_3,Z_4$.
	Consequently,
	\begin{align} \label{appr:t2}
	\Ttw{\eta_{u}}{\vph} \leq C\sfq(N^{-1}\max|\psi^{\prime}|)^{k+1}\enorm{\vph}.
	\end{align}
	Using \eqref{appr:t1}, \eqref{appr:t3}, \eqref{appr:t4}, and \eqref{appr:t2}, we obtain
	\begin{align*} 
	\enorm{\bm \xi}^2 = \Bln{\bm\xi}{\bm\xi} = \Bln{\bm\eta}{\bm\xi} 
	\leq 	C\Big[
	(\sfq+\sparep)(N^{-1}\max|\ppri|)^{k+1}+N^{-(k+1)}
	\Big] \enorm{\bm \xi},
	\end{align*}
	which leads to
	\begin{align}
	\enorm{\bm \xi} \leq \;
	C\Big[
	(\sfq+\sparep)(N^{-1}\max|\psi^{\prime}|)^{k+1} + N^{-(k+1)}
	\Big].
	\end{align}
	The final assertion follows by repeating similar arguments as before.
	This completes the proof.
\end{proof}

\section{Numerical experiments}
\label{sec:experiments}

In this section, we present some numerical experiments.
All calculations were conducted in MATLAB R2015B.
The system of linear equations resulting from the discrete problems
were solved by the lower--upper (LU)-decomposition algorithm. All integrals were evaluated
using the 5-point Gauss--Legendre quadrature rule.

The LDG method \eqref{LDG:scheme:2d} was applied
to the layer-adapted meshes presented in Table \ref{table:functions},
where $\sigma=k+1$, $k=0,1,2,3$.
We let $\bm e^N$ be the error in either $\enorm{\bm e}$ or $\ba{\bm e}$ for an $N$-element.
In the former case, we took the flux parameter $\lambda_i = \ssq$
for $i=0,N$ and $\lambda_i = 0$ for $i=1,2,\dots,N-1$.
In the last case, we took the flux parameter $\lambda_i = \ssq$
for $i=0,1,\dots,N$.
The corresponding convergence rates were computed by the following formulae:
\begin{align*}
r_S =
\frac{ \log\bm e^N-\log\bm e^{2N} }{\log p },
\quad
r_2 =
\frac{ \log\bm e^N-\log\bm e^{2N} }{\log 2 }.
\end{align*}
Here, $p = 2\ln N/\ln(2N)$ was used 
to compute the numerical convergence order
with respect to the power of $\ln N/N$.

\textbf{Example 1.}
Consider a linear reaction--diffusion problem 
\begin{subequations}
	\begin{align}
	-\varepsilon \Delta u +  2u
	&= f(x,y),   &\textrm{in}&\;  \Omega =(0,1)^2,
	\\
	u &= 0,   &\textrm{on}&\; \partial \Omega,
	\end{align}
\end{subequations}
where $f(x,y)$ is suitably taken such that the exact solution is 
$u(x,y) = g(x)g(y)$ with
\begin{align}\label{solution:cos}
g(v)=\frac{e^{-v/\ssq}-e^{-(1-v)/\ssq}}{1-e^{-1/\ssq}}-\cos(\pi v).
\end{align}

We set $\varepsilon=10^{-8}$, small enough
to bring out the singularly perturbed nature of \eqref{solution:cos}.
In Table \ref{table:stb:balanced},
we list the balanced-norm errors and their convergence rates.
We observed convergence of order $k+1/2$,
which is a half-order superior to the estimate from \eqref{error:balanced:general}.
In Table \ref{table:stb:energy},
we present the energy norm errors and their convergence rates,
which agree with our estimate from \eqref{error:energy:general}.

We show the relevance of these errors to the small parameter $\varepsilon$.
We let $N=256$, $k=1$, and varied the values of $\varepsilon$.
From Table \ref{table:stb:eps}, we see that the errors in the balanced norm are almost unchanged, whereas the errors in the energy norm change slightly.
For a visual understanding,
we plotted the energy errors via $\sq$ on log--log coordinates.
In Figure \ref{fig:ep:err:linear},
we observe the subtle influence of the $\sq^{0.25}$-factor on the energy errors; the results agree with our predictions.

\textbf{Example 2.}
Consider the nonlinear--reaction-diffusion problem 
\begin{subequations}
	\begin{align}
	-\varepsilon \Delta u +  [2+xy(1-x)(1-y)] u
	&= f(x,y),   &\textrm{in}&\;  \Omega =(0,1)^2,
	\\
	u &= 0,   &\textrm{on}&\; \partial \Omega,
	\end{align}
\end{subequations}
where $f(x,y)$ is suitably taken such that the exact solution is 
$u(x,y) = h(x)h(y)$ and
\begin{align}\label{solution:nonlinear}
h(v)=1+(v-1)e^{-v/\ssq}-ve^{-(1-v)/\ssq}.
\end{align}

We let $\varepsilon=10^{-8}$. In Table \ref{table:stb:balanced:nonlinear} and Table \ref{table:stb:energy:nonlinear}, we list the error and convergence rates for the balanced and energy norms, respectively. We still observed convergence of orders $k+1/2$ and $k+1$ for the balanced-norm and energy norm errors.

Moreover, we tested the dependence of these two types of errors on $\varepsilon$. We clearly observed in Table \ref{table:eps:nonlinear} that the errors in the balanced norm were almost constant, whereas the errors in the energy norm changed slightly. In Figure \ref{fig:ep:err:nonlinear}, we confirmed the influence of the factor $\varepsilon^{0.25}$ on the energy norm errors obtained for the three layer-adapted meshes.
Note that for this example, the 
$\varepsilon^{0.25}$-factor is clearly observed on both the S-type and B-type meshes.
This may be due to the fact that the regular part of the exact solution
belongs to $\spc$, as described in \cite{Zhu:1dCMS}.

\begin{table}[ht]
	\normalsize
	\centering
	\caption{Balanced error and convergence rates for Example 1.}
	\label{table:stb:balanced}
	\begin{tabular}{cccccccc}
		\toprule
		\multirow{2}*{$k$}&\multirow{2}*{$N$} & \multicolumn{2}{c}{S-mesh} & \multicolumn{2}{c}{BS-mesh} & \multicolumn{2}{c}{B-mesh} \\
		\cmidrule(r){3-4} \cmidrule(r){5-6} \cmidrule(r){7-8}
		&& Balanced error & $r_s$ & Balanced error & $r_2$ & Balanced error & $r_2$
		\\
		\midrule
		0
		& 8 & 1.37e+00 & - & 1.36e+00 & - & 1.55e+00 & - \\
		& 16 & 1.09e+00 & 0.57 & 1.04e+00 & 0.39 & 1.10e+00 & 0.49 \\
		& 32 & 8.36e-01 & 0.57 & 7.47e-01 & 0.47 & 7.67e-01 & 0.52 \\
		& 64 & 6.31e-01 & 0.55 & 5.30e-01 & 0.50 & 5.36e-01 & 0.52 \\
		& 128 & 4.73e-01 & 0.54 & 3.74e-01 & 0.50 & 3.76e-01 & 0.51 \\
		& 256 & 3.52e-01 & 0.53 & 2.64e-01 & 0.50 & 2.65e-01 & 0.51 \\
		1
		& 8 & 3.67e-01 & - & 2.48e-01 & - & 3.86e-01 & - \\
		& 16 & 2.22e-01 & 1.25 & 9.83e-02 & 1.33 & 1.22e-01 & 1.66 \\
		& 32 & 1.19e-01 & 1.32 & 3.75e-02 & 1.39 & 4.17e-02 & 1.55 \\
		& 64 & 5.83e-02 & 1.40 & 1.39e-02 & 1.43 & 1.46e-02 & 1.51 \\
		& 128 & 2.68e-02 & 1.44 & 5.04e-03 & 1.46 & 5.16e-03 & 1.50 \\
		& 256 & 1.18e-02 & 1.46 & 1.81e-03 & 1.48 & 1.83e-03 & 1.49 \\
		2
		& 8 & 1.61e-01 & - & 7.24e-02 & - & 1.45e-01 & - \\
		& 16 & 7.40e-02 & 1.92 & 1.58e-02 & 2.20 & 2.26e-02 & 2.69 \\
		& 32 & 2.68e-02 & 2.16 & 3.11e-03 & 2.35 & 3.71e-03 & 2.61 \\
		& 64 & 8.19e-03 & 2.32 & 5.83e-04 & 2.42 & 6.34e-04 & 2.55 \\
		& 128 & 2.23e-03 & 2.42 & 1.06e-04 & 2.45 & 1.11e-04 & 2.52 \\
		& 256 & 5.62e-04 & 2.46 & 1.91e-05 & 2.48 & 1.95e-05 & 2.51 \\
		3
		& 8 & 7.16e-02 & - & 2.16e-02 & - & 5.87e-02 & - \\
		& 16 & 2.52e-02 & 2.57 & 2.52e-03 & 3.10 & 4.22e-03 & 3.80 \\
		& 32 & 6.29e-03 & 2.96 & 2.55e-04 & 3.30 & 3.29e-04 & 3.68 \\
		& 64 & 1.21e-03 & 3.23 & 2.42e-05 & 3.40 & 2.74e-05 & 3.59 \\
		& 128 & 1.96e-04 & 3.38 & 2.22e-06 & 3.45 & 2.35e-06 & 3.54 \\
		& 256 & 2.85e-05 & 3.45 & 2.01e-07 & 3.47 & 2.06e-07 & 3.51 \\
		\bottomrule
	\end{tabular}
\end{table}

\begin{table}[ht]
	\normalsize
	\centering
	\caption{Energy error and convergence rates for Example 1.}
	\label{table:stb:energy}
	\begin{tabular}{cccccccc}
		\toprule
		\multirow{2}*{$k$}&\multirow{2}*{$N$} & \multicolumn{2}{c}{S-mesh} & \multicolumn{2}{c}{BS-mesh} & \multicolumn{2}{c}{B-mesh} \\
		\cmidrule(r){3-4} \cmidrule(r){5-6} \cmidrule(r){7-8}
		&& Energy error & $r_S$ & Energy error & $r_2$ & Energy error & $r_2$
		\\
		\midrule
		0
		& 8 & 2.22e-01 & - & 2.22e-01 & - & 2.21e-01 & - \\
		& 16 & 1.13e-01 & 1.67 & 1.13e-01 & 0.96 & 1.13e-01 & 0.98 \\
		& 32 & 5.67e-02 & 1.46 & 5.66e-02 & 0.94 & 5.66e-02 & 0.99 \\
		& 64 & 2.84e-02 & 1.35 & 2.83e-02 & 0.99 & 2.83e-02 & 1.00 \\
		& 128 & 1.43e-02 & 1.28 & 1.42e-02 & 1.00 & 1.42e-02 & 1.00 \\
		& 256 & 7.15e-03 & 1.23 & 7.08e-03 & 1.00 & 7.08e-03 & 1.00 \\
		1
		& 8 & 2.30e-02 & - & 2.29e-02 & - & 2.30e-02 & - \\
		& 16 & 6.06e-03 & 3.29 & 5.77e-03 & 1.99 & 5.81e-03 & 1.98 \\
		& 32 & 1.73e-03 & 2.67 & 1.45e-03 & 1.99 & 1.46e-03 & 1.99 \\
		& 64 & 5.40e-04 & 2.27 & 3.64e-04 & 2.00 & 3.66e-04 & 2.00 \\
		& 128 & 1.77e-04 & 2.07 & 9.12e-05 & 2.00 & 9.17e-05 & 2.00 \\
		& 256 & 5.81e-05 & 1.99 & 2.29e-05 & 1.99 & 2.30e-05 & 1.99 \\
		2
		& 8 & 2.20e-03 & - & 1.66e-03 & - & 2.23e-03 & - \\
		& 16 & 7.06e-04 & 2.80 & 2.26e-04 & 2.87 & 2.89e-04 & 2.95 \\
		& 32 & 2.24e-04 & 2.44 & 3.05e-05 & 2.89 & 3.72e-05 & 2.96 \\
		& 64 & 6.01e-05 & 2.58 & 4.04e-06 & 2.92 & 4.77e-06 & 2.96 \\
		& 128 & 1.39e-05 & 2.72 & 5.30e-07 & 2.93 & 6.08e-07 & 2.97 \\
		& 256 & 2.85e-06 & 2.83 & 6.90e-08 & 2.94 & 7.74e-08 & 2.97 \\
		3
		& 8 & 7.11e-04 & - & 2.19e-04 & - & 6.89e-04 & - \\
		& 16 & 2.31e-04 & 2.77 & 2.03e-05 & 3.43 & 4.37e-05 & 3.98 \\
		& 32 & 5.27e-05 & 3.14 & 1.60e-06 & 3.66 & 2.75e-06 & 3.99 \\
		& 64 & 9.15e-06 & 3.43 & 1.16e-07 & 3.79 & 1.73e-07 & 3.99 \\
		& 128 & 1.29e-06 & 3.63 & 8.05e-09 & 3.85 & 1.08e-08 & 3.99 \\
		& 256 & 1.56e-07 & 3.77 & 5.43e-10 & 3.89 & 6.80e-10 & 3.99 \\
		\bottomrule
	\end{tabular}
\end{table}

\begin{table}[htp]
	\normalsize
	\centering
	\caption{ Energy norm and balanced-norm errors
		for different $\varepsilon$.}
	\label{table:stb:eps}
	\begin{tabular}{ccccccc}
		\toprule
		\multirow{2}*{$\varepsilon$}& \multicolumn{3}{c}{Energy  error} &\multicolumn{3}{c}{Balanced error}\\
		\cmidrule(lr){2-4} \cmidrule(lr){5-7}
		& S-mesh & BS-mesh & B-mesh & S-mesh & BS-mesh & B-mesh\\
		\midrule
		$10^{-6}$  & 1.71e-04 & 2.94e-05 & 3.02e-05
		& 1.18e-02 & 1.85e-03 & 1.87e-03\\
		$10^{-7}$  & 9.79e-05 & 2.42e-05 & 2.45e-05
		& 1.18e-02 & 1.83e-03 & 1.84e-03\\
		$10^{-8}$  & 5.81e-05 & 2.29e-05 & 2.30e-05 
		& 1.18e-02 & 1.81e-03 & 1.83e-03\\
		$10^{-9}$  & 3.76e-05 & 2.26e-05 & 2.26e-05 
		& 1.18e-02 & 1.81e-03 & 1.83e-03\\
		$10^{-10}$  & 2.81e-05 & 2.25e-05 & 2.25e-05 
		& 1.18e-02 & 1.81e-03 & 1.83e-03\\
		$10^{-11}$  & 2.44e-05 & 2.25e-05 & 2.25e-05 
		& 1.18e-02 & 1.81e-03 & 1.83e-03\\
		$10^{-12}$  & 2.31e-05 & 2.25e-05 & 2.25e-05 
		& 1.18e-02 & 1.81e-03 & 1.83e-03\\
		$10^{-13}$  & 2.27e-05 & 2.25e-05 & 2.25e-05 
		& 1.18e-02 & 1.81e-03 & 1.83e-03\\
		$10^{-14}$  & 2.25e-05 & 2.25e-05 & 2.25e-05 
		& 1.18e-02 & 1.81e-03 & 1.83e-03\\
		$10^{-15}$  & 2.25e-05 & 2.25e-05 & 2.25e-05 
		& 1.18e-02 & 1.81e-03 & 1.83e-03\\
		$10^{-16}$  & 2.25e-05 & 2.25e-05 & 2.25e-05
		& 1.18e-02 & 1.81e-03 & 1.83e-03\\
		\bottomrule
	\end{tabular}
\end{table}

\begin{figure}
	\centering
	\includegraphics[width=0.7\linewidth]{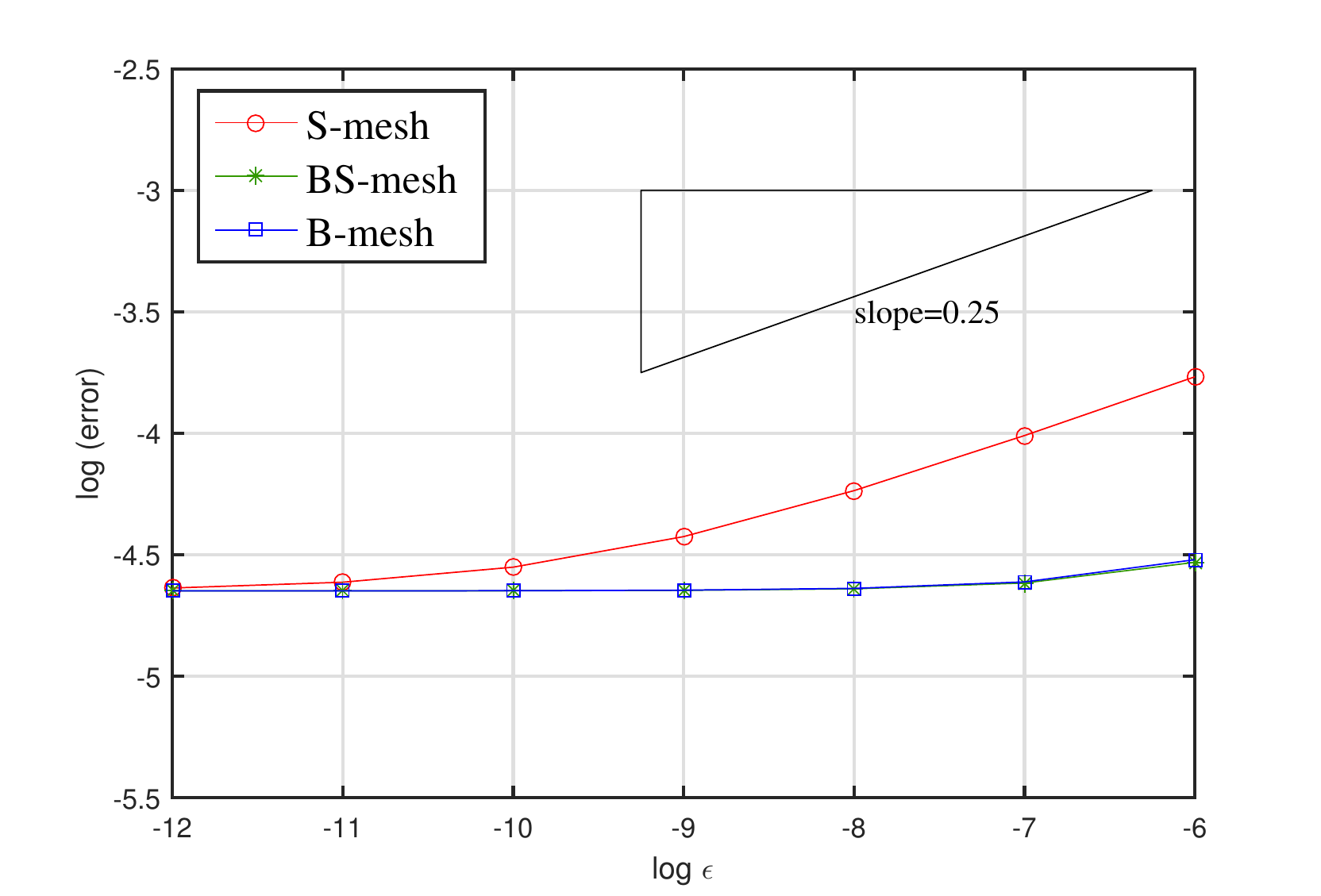}
	\caption{Energy norm error from $\sq$ in Example 1.}
	\label{fig:ep:err:linear}
\end{figure}

\begin{table}[ht]
	\normalsize
	\centering
	\caption{Balanced error and convergence rates for Example 2.}
	\label{table:stb:balanced:nonlinear}
	\begin{tabular}{cccccccc}
		\toprule
		\multirow{2}*{$k$}&\multirow{2}*{$N$} & \multicolumn{2}{c}{S-mesh} & \multicolumn{2}{c}{BS-mesh} & \multicolumn{2}{c}{B-mesh} \\
		\cmidrule(r){3-4} \cmidrule(r){5-6} \cmidrule(r){7-8}
		&& Balanced error & $r_s$ & Balanced error & $r_2$ & Balanced error & $r_2$
		\\
		\midrule
		0
		& 8 & 1.32e+00 & - & 1.30e+00 & - & 1.63e+00 & - \\
		& 16 & 1.09e+00 & 0.48 & 9.81e-01 & 0.41 & 1.10e+00 & 0.57 \\
		& 32 & 8.83e-01 & 0.45 & 7.07e-01 & 0.47 & 7.47e-01 & 0.56 \\
		& 64 & 6.99e-01 & 0.46 & 5.01e-01 & 0.50 & 5.15e-01 & 0.54 \\
		& 128 & 5.42e-01 & 0.47 & 3.54e-01 & 0.50 & 3.59e-01 & 0.52 \\
		& 256 & 4.13e-01 & 0.49 & 2.50e-01 & 0.50 & 2.52e-01 & 0.51 \\
		1
		& 8 & 5.07e-01 & - & 3.33e-01 & - & 5.41e-01 & - \\
		& 16 & 3.12e-01 & 1.20 & 1.37e-01 & 1.28 & 1.72e-01 & 1.65 \\
		& 32 & 1.68e-01 & 1.32 & 5.27e-02 & 1.38 & 5.88e-02 & 1.55 \\
		& 64 & 8.25e-02 & 1.40 & 1.96e-02 & 1.43 & 2.06e-02 & 1.51 \\
		& 128 & 3.79e-02 & 1.44 & 7.11e-03 & 1.46 & 7.29e-03 & 1.50 \\
		& 256 & 1.67e-02 & 1.47 & 2.55e-03 & 1.48 & 2.58e-03 & 1.50 \\
		2
		& 8 & 2.27e-01 & - & 1.01e-01 & - & 2.05e-01 & - \\
		& 16 & 1.05e-01 & 1.91 & 2.22e-02 & 2.19 & 3.18e-02 & 2.69 \\
		& 32 & 3.80e-02 & 2.16 & 4.37e-03 & 2.34 & 5.22e-03 & 2.61 \\
		& 64 & 1.16e-02 & 2.32 & 8.19e-04 & 2.42 & 8.93e-04 & 2.55 \\
		& 128 & 3.15e-03 & 2.42 & 1.50e-04 & 2.45 & 1.56e-04 & 2.52 \\
		& 256 & 7.95e-04 & 2.46 & 2.69e-05 & 2.47 & 2.74e-05 & 2.51 \\
		3
		& 8 & 1.01e-01 & - & 3.06e-02 & - & 8.30e-02 & - \\
		& 16 & 3.57e-02 & 2.57 & 3.56e-03 & 3.10 & 5.96e-03 & 3.80 \\
		& 32 & 8.90e-03 & 2.96 & 3.61e-04 & 3.30 & 4.66e-04 & 3.68 \\
		& 64 & 1.71e-03 & 3.23 & 3.43e-05 & 3.40 & 3.87e-05 & 3.59 \\
		& 128 & 2.78e-04 & 3.38 & 3.15e-06 & 3.45 & 3.33e-06 & 3.54 \\
		& 256 & 4.03e-05 & 3.45 & 2.84e-07 & 3.47 & 2.91e-07 & 3.51 \\
		\bottomrule
	\end{tabular}
\end{table}

\begin{table}[ht]
	\normalsize
	\centering
	\caption{Energy error and convergence rates for Example 2.}
	\label{table:stb:energy:nonlinear}
	\begin{tabular}{cccccccc}
		\toprule
		\multirow{2}*{$k$}&\multirow{2}*{$N$} & \multicolumn{2}{c}{S-mesh} & \multicolumn{2}{c}{BS-mesh} & \multicolumn{2}{c}{B-mesh} \\
		\cmidrule(r){3-4} \cmidrule(r){5-6} \cmidrule(r){7-8}
		&& Energy error & $r_S$ & Energy error & $r_2$ & Energy error & $r_2$
		\\
		\midrule
		0
		& 8 & 1.06e-02 & - & 9.30e-03 & - & 1.51e-02 & - \\
		& 16 & 8.15e-03 & 0.65 & 5.57e-03 & 0.74 & 7.65e-03 & 0.98 \\
		& 32 & 5.79e-03 & 0.73 & 3.12e-03 & 0.84 & 3.96e-03 & 0.95 \\
		& 64 & 3.85e-03 & 0.80 & 1.69e-03 & 0.89 & 2.05e-03 & 0.95 \\
		& 128 & 2.41e-03 & 0.86 & 8.95e-04 & 0.92 & 1.05e-03 & 0.96 \\
		& 256 & 1.45e-03 & 0.91 & 4.69e-04 & 0.95 & 5.36e-04 & 0.97 \\
		1
		& 8 & 5.07e-03 & - & 3.29e-03 & - & 6.42e-03 & - \\
		& 16 & 2.86e-03 & 1.41 & 1.12e-03 & 1.56 & 1.76e-03 & 1.86 \\
		& 32 & 1.37e-03 & 1.57 & 3.35e-04 & 1.74 & 4.68e-04 & 1.91 \\
		& 64 & 5.73e-04 & 1.70 & 9.48e-05 & 1.82 & 1.23e-04 & 1.93 \\
		& 128 & 2.17e-04 & 1.80 & 2.60e-05 & 1.87 & 3.18e-05 & 1.95 \\
		& 256 & 7.59e-05 & 1.88 & 6.97e-06 & 1.91 & 8.19e-06 & 1.96 \\
		2
		& 8 & 2.27e-03 & - & 9.73e-04 & - & 2.34e-03 & - \\
		& 16 & 9.63e-04 & 2.11 & 1.75e-04 & 2.47 & 3.11e-04 & 2.91 \\
		& 32 & 3.15e-04 & 2.37 & 2.71e-05 & 2.69 & 4.08e-05 & 2.93 \\
		& 64 & 8.49e-05 & 2.57 & 3.89e-06 & 2.80 & 5.31e-06 & 2.94 \\
		& 128 & 1.96e-05 & 2.72 & 5.36e-07 & 2.86 & 6.86e-07 & 2.95 \\
		& 256 & 4.04e-06 & 2.82 & 7.23e-08 & 2.89 & 8.80e-08 & 2.96 \\
		3
		& 8 & 1.00e-03 & - & 2.91e-04 & - & 9.59e-04 & - \\
		& 16 & 3.27e-04 & 2.76 & 2.80e-05 & 3.38 & 6.15e-05 & 3.98 \\
		& 32 & 7.46e-05 & 3.14 & 2.23e-06 & 3.65 & 3.87e-06 & 3.99 \\
		& 64 & 1.29e-05 & 3.43 & 1.62e-07 & 3.78 & 2.43e-07 & 3.99 \\
		& 128 & 1.83e-06 & 3.63 & 1.13e-08 & 3.85 & 1.53e-08 & 3.99 \\
		& 256 & 2.21e-07 & 3.77 & 7.62e-10 & 3.89 & 9.56e-10 & 4.00 \\
		\bottomrule
	\end{tabular}
\end{table}

\begin{table}[htp]
	\normalsize
	\centering
	\caption{ Energy norm and balanced-norm errors
		for different $\varepsilon$.}
	\label{table:eps:nonlinear}
	\begin{tabular}{ccccccc}
		\toprule
		& \multicolumn{3}{c}{Energy error} &\multicolumn{3}{c}{Balanced error}\\
		\cmidrule(r){2-4} \cmidrule(r){5-7}
		$\varepsilon$ & S-mesh & BS-mesh & B-mesh & S-mesh & BS-mesh & B-mesh\\
		\midrule
		$10^{-6}$  & 2.40e-04 & 2.20e-05 & 2.37e-05
		& 1.67e-02 & 2.55e-03 & 2.57e-03\\
		$10^{-7}$  & 1.35e-04 & 1.24e-05 & 1.40e-05
		& 1.67e-02 & 2.55e-03 & 2.58e-03\\
		$10^{-8}$  & 7.59e-05 & 6.97e-04 & 8.19e-06
		& 1.67e-02 & 2.55e-03 & 2.58e-03\\
		$10^{-9}$  & 4.27e-05 & 3.92e-06 & 4.71e-06
		& 1.67e-02 & 2.55e-03 & 2.59e-03\\
		$10^{-10}$  & 2.40e-05 & 2.21e-06 & 2.69e-06
		& 1.67e-02 & 2.55e-03 & 2.59e-03\\
		$10^{-11}$  & 1.35e-05 & 1.24e-06 & 1.52e-06
		& 1.67e-02 & 2.55e-03 & 2.59e-03\\
		$10^{-12}$  & 7.58e-06 & 6.98e-07 & 8.55e-07
		& 1.67e-02 & 2.55e-03 & 2.59e-03\\
		$10^{-13}$  & 4.27e-06 & 3.92e-07 & 4.79e-07
		& 1.67e-02 & 2.55e-03 & 2.59e-03\\
		$10^{-14}$  & 2.40e-06 & 2.21e-07 & 2.69e-07
		& 1.67e-02 & 2.55e-03 & 2.59e-03\\
		$10^{-15}$  & 1.35e-06 & 1.24e-07 & 1.50e-07
		& 1.67e-02 & 2.55e-03 & 2.59e-03\\
		$10^{-16}$  & 7.58e-07 & 6.98e-08 & 8.40e-08
		& 1.67e-02 & 2.55e-03 & 2.59e-03\\
		\bottomrule
	\end{tabular}
\end{table}

\begin{figure}
	\centering
	\includegraphics[width=0.7\linewidth]{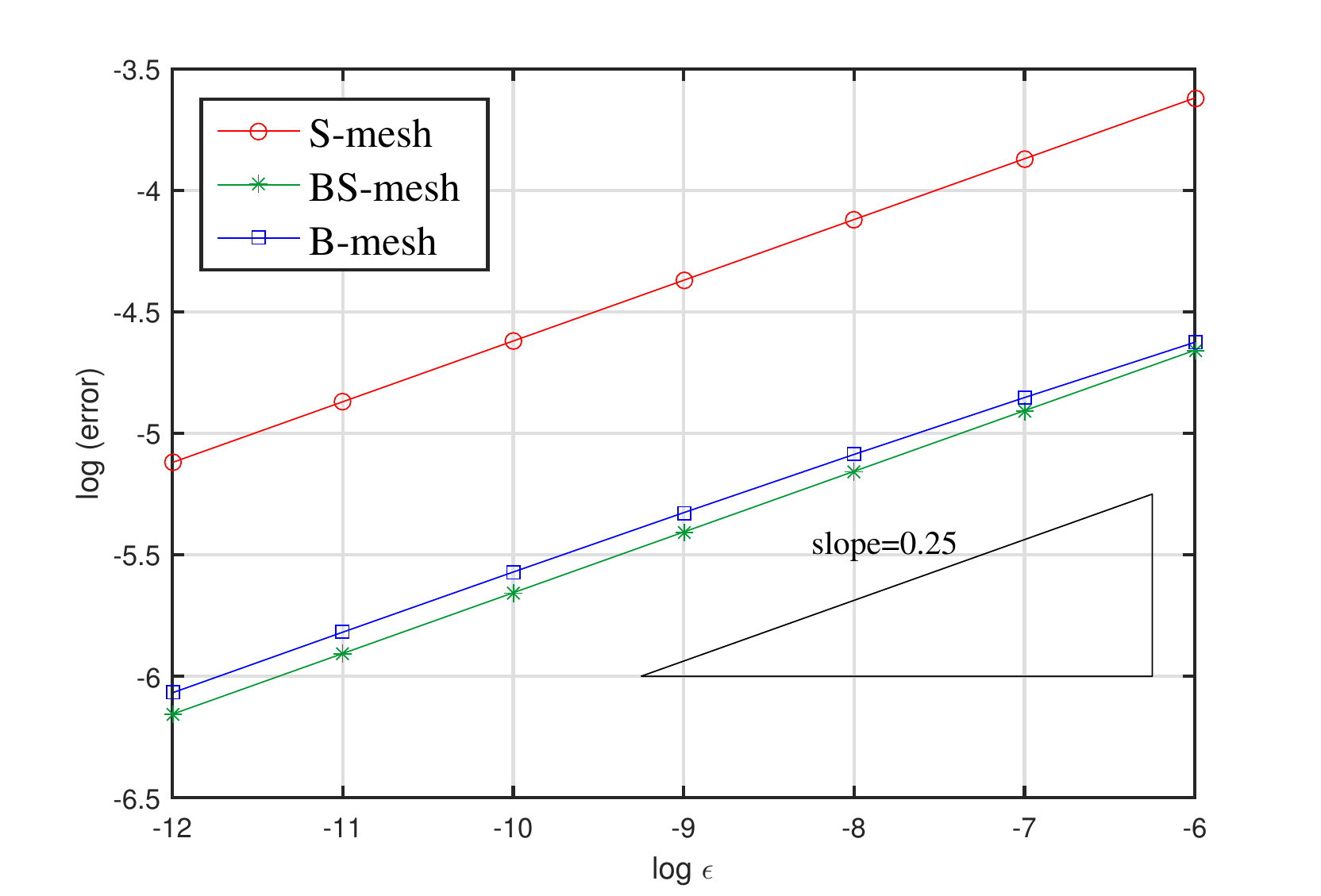}
	\caption{Energy norm error from $\sq$ in Example 2.}
	\label{fig:ep:err:nonlinear}
\end{figure}

\section*{Acknowledgements}

This study was supported by the National Natural Science Foundation of China 
(No. 11801396), and the Natural Science Foundation of Jiangsu Province
(No. BK20170374).

\section*{Appendix}
\label{sec:LDGCN}
\setcounter{equation}{0}
\setcounter{subsection}{0}
\renewcommand{\theequation}{A.\arabic{equation}}
\renewcommand{\thetheorem}{A.\arabic{theorem}}
\renewcommand{\thesubsection}{A.\arabic{subsection}}
\renewcommand{\theproposition}{A.\arabic{proposition}}

In this appendix, we mention several results for parabolic singularly
perturbed reaction--diffusion problems:
\begin{subequations}\label{spp:dvp:R-D}
	\begin{align}
	u_t -\varepsilon \Delta u +  b(x,y) u
	&= f(x,y,t)   &\textrm{in}&\;  \Omega \times (0,T],
	\\
	u|_{t=0} &= u_0(x,y),   &\textrm{in}&\; \overline{\Omega}
	\\
	u|_{\partial\Omega} &= 0,   &\textrm{for}&\; t \in (0,T).
	\end{align}
\end{subequations}

Let $M$ be a positive integer and $0=t^0<t^1<\dots<t^M=T$ 
be an equidistant partition of $[0,T]$.
We define the time interval $K^m=(t^{m-1},t^m]$, $m=1,2,\dots,M$,
with a mesh width $\Delta t=t^m-t^{m-1}$, which satisfies $M\Delta t =T$.
We write $v^{m}=v(t^m)$ and $v^{m,\theta}=\theta v^{m}+(1-\theta)v^{m-1}$.

Assume $1/2\leq \theta \leq 1$.
The fully discrete scheme for \eqref{spp:dvp:R-D}
is constructed by the LDG in space and 
by the implicit $\theta$-scheme in time.
It reads as follows:
Let $\uph^0 = \Pi u_0$ be the local $L^2$ projection of $u_0$.
For any $m=1,2,\dots,M$, find the numerical solution
$\wph^{m}=(\uph^m,\pph^m,\qph^m)\in \mathcal{V}_N^3$ such that
\begin{align}\label{fully:discrete}
\dual{\frac{\uph^{m}-\uph^{m-1}}{\Delta t}}{\vphu}
+\Bln{\wph^{m,\theta}}{\vph}=0
\end{align}
holds for any $\vph=(\vphu,\vphp,\vphq)\in \spc^3$,
where $\Bln{\wph^{m,\theta}}{\vph}$ is defined in \eqref{B:def:2d}.
Here, we use the abbreviations
$t^{m,\theta}=\theta t^m+(1-\theta)t^{m-1}$ and 
$g^{m,\theta}=\theta g^m+(1-\theta)g^{m-1}$
for functions $g=g(t)$ and $g^m=g(m\Delta t)$.
Following the traditional energy analysis and concept of \cite{Cheng2020},
we have

\begin{theorem}
	Let $\bm w=(u,p,q)$ be the solution to problem \eqref{spp:dvp:R-D}
	and satisfy	an analogous decomposition as the stationary case.
	Let $\wph^{m}=(\uph^m,\pph^m,\qph^m)\in \spc^3$,
	$m=1,2,\dots,M$ be the numerical solution
	of the fully discrete scheme \eqref{fully:discrete},
	where $V_N$ is composed of piecewise polynomials of degree $k\geq 0$
	on layer-adapted meshes \eqref{layer-adapted}
	with $\sigma\geq k+2$.
	Then, there exists a constant $C>0$ independent of $\varepsilon,N$ and $M$ such that
	\begin{align}\label{err:estimate:LDGCN}
	&\norm{u^M-\uph^M}^2
	+\Delta t\sum_{m=1}^M \enorm{\bm{w}^{m,\theta}-\wph^{m,\theta}}^2 
	\nonumber\\
	&\leq C(1+T)\Big[
	(\ssq+\parep)(N^{-1}\max|\ppri|)^{2(k+1)}
	+N^{-2(k+1)}
	+ (\Delta t)^{2r}\Big],
	\end{align}
	where $r=1$ if $1/2<\theta\leq 1$ and $r=2$ if $\theta=1/2$.
\end{theorem}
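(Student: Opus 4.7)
\section*{Proof proposal for the parabolic theorem}

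The plan is to combine the spatial analysis used in Theorem~\ref{thm:energy} with a standard discrete energy argument for the $\theta$-scheme in time. As in the stationary case, I would first split the error by means of the Gauss--Radau projections. For each $m=0,1,\dots,M$, set
\begin{equation*}
\bm \eta^{m}=(u^{m}-\Pi^{-}u^{m},\,p^{m}-\Pi_{x}^{+}p^{m},\,q^{m}-\Pi_{y}^{+}q^{m}),\qquad
\bm \xi^{m}=\wph^{m}-(\Pi^{-}u^{m},\Pi_{x}^{+}p^{m},\Pi_{y}^{+}q^{m}),
\end{equation*}
so that $\bm{w}^{m}-\wph^{m}=\bm \eta^{m}-\bm \xi^{m}$, and write $\bm{\eta}^{m,\theta}=\theta\bm{\eta}^{m}+(1-\theta)\bm{\eta}^{m-1}$, similarly for $\bm \xi^{m,\theta}$. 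The continuous solution at $t^{m,\theta}$ satisfies
\begin{equation*}
\dual{u_{t}^{m,\theta}}{\vphu}+\Bln{\bm w^{m,\theta}}{\vph}=\dual{f^{m,\theta}}{\vphu},
\end{equation*}
and subtracting \eqref{fully:discrete} together with the projection identities yields the error equation
\begin{equation*}
\dual{\tfrac{\xi_{u}^{m}-\xi_{u}^{m-1}}{\Delta t}}{\vphu}+\Bln{\bm \xi^{m,\theta}}{\vph}
=\dual{\tfrac{\eta_{u}^{m}-\eta_{u}^{m-1}}{\Delta t}}{\vphu}+\Bln{\bm \eta^{m,\theta}}{\vph}+\dual{\rho^{m}}{\vphu},
\end{equation*}
where $\rho^{m}=u_{t}^{m,\theta}-(u^{m}-u^{m-1})/\Delta t$ is the temporal truncation error.

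Next I would take the test function $\vph=\bm \xi^{m,\theta}$. Because $\theta\geq 1/2$, the elementary inequality $2(\xi_{u}^{m}-\xi_{u}^{m-1})\xi_{u}^{m,\theta}\geq (\xi_{u}^{m})^{2}-(\xi_{u}^{m-1})^{2}$ converts the first term on the left into a telescoping piece, and $\Bln{\bm \xi^{m,\theta}}{\bm \xi^{m,\theta}}=\enorm{\bm \xi^{m,\theta}}^{2}$. On the right-hand side, the term $\Bln{\bm \eta^{m,\theta}}{\bm \xi^{m,\theta}}$ is bounded exactly as in the proof of Theorem~\ref{thm:energy}, giving
\begin{equation*}
\Bln{\bm \eta^{m,\theta}}{\bm \xi^{m,\theta}}\leq C\Big[(\sfq+\sparep)(N^{-1}\max|\ppri|)^{k+1}+N^{-(k+1)}\Big]\enorm{\bm \xi^{m,\theta}},
\end{equation*}
by invoking Lemma~\ref{lemma:GR} together with the supercloseness identities \eqref{2d:sup} for the $\mathcal{T}_{2}$-piece. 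The term $\dual{(\eta_{u}^{m}-\eta_{u}^{m-1})/\Delta t}{\xi_{u}^{m,\theta}}$ is controlled after noting that this quantity equals $u_{t}^{m,\theta}-\Pi^{-}u_{t}^{m,\theta}$ up to $\mathcal{O}(\Delta t)$, so Lemma~\ref{lemma:GR} applied to $u_{t}$ (using the parabolic analogue of the solution decomposition) produces the same spatial bound. Finally, the temporal residual $\rho^{m}$ satisfies $\|\rho^{m}\|_{L^{2}(\Omega)}\leq C(\Delta t)^{r}\sup_{t\in K^{m}}\|\partial_{t}^{r+1}u\|_{L^{2}(\Omega)}$ by Taylor expansion, with $r=1$ when $\theta>1/2$ and $r=2$ when $\theta=1/2$, so $\dual{\rho^{m}}{\xi_{u}^{m,\theta}}\leq C(\Delta t)^{r}\|\xi_{u}^{m,\theta}\|$.

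After applying Young's inequality to absorb $\enorm{\bm \xi^{m,\theta}}$ factors into the coercive left-hand side, summing from $m=1$ to $m=M$ and multiplying by $\Delta t$ gives
\begin{equation*}
\|\xi_{u}^{M}\|^{2}+\Delta t\sum_{m=1}^{M}\enorm{\bm \xi^{m,\theta}}^{2}
\leq C\Delta t\sum_{m=1}^{M}\|\xi_{u}^{m,\theta}\|^{2}+C T\mathcal{E}(\sq,N)+CT(\Delta t)^{2r},
\end{equation*}
where $\mathcal{E}(\sq,N)$ denotes the spatial error squared. Because $\uph^{0}=\Pi u_{0}$, the initial error $\|\xi_{u}^{0}\|$ is of the projection size, and the discrete Gronwall inequality eliminates the first term on the right at the cost of a factor $e^{CT}$ that is absorbed into $C(1+T)$ after the standard bookkeeping. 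A triangle inequality combining the bound on $\bm \xi$ with Lemma~\ref{lemma:GR} applied to $\bm \eta^{M}$ then yields the stated estimate.

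The principal obstacle is the careful treatment of $\dual{(\eta_{u}^{m}-\eta_{u}^{m-1})/\Delta t}{\xi_{u}^{m,\theta}}$: one must ensure that the parabolic solution inherits a decomposition $u_{t}=S_{t}+\sum W_{k,t}+\sum Z_{k,t}$ whose components obey bounds analogous to \eqref{reg:u:2d}, so that Lemma~\ref{lemma:GR} delivers a projection estimate \emph{in the same form} as for $u$. Once this regularity is granted, the remaining manipulations are routine, but without it the extra half-power $\sfq$ gain that produces the factor $(\ssq+\parep)$ on the right of \eqref{err:estimate:LDGCN} would be lost. The temporal analysis poses no new difficulty beyond verifying that $\partial_{t}^{3}u$ remains uniformly bounded in $L^{2}(\Omega)$ for the Crank--Nicolson case $\theta=1/2$.
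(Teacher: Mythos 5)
Your outline follows the route the paper intends (the paper itself gives no details for this theorem, appealing only to ``the traditional energy analysis'' and to the reference \cite{Cheng2020}): Gauss--Radau splitting of the error, testing the error equation with $\bm \xi^{m,\theta}$, the telescoping inequality $2\dual{\xi_u^m-\xi_u^{m-1}}{\xi_u^{m,\theta}}\geq \norm{\xi_u^m}^2-\norm{\xi_u^{m-1}}^2$ valid for $\theta\geq 1/2$, reuse of the stationary bounds of Theorem \ref{thm:energy} for $\Bln{\bm\eta^{m,\theta}}{\bm\xi^{m,\theta}}$, and a projection estimate for the time-averaged $u_t$. One step, however, does not deliver the stated constant: after Young's inequality you keep $C\Delta t\sum_m\norm{\xi_u^{m,\theta}}^2$ on the right and invoke the discrete Gronwall inequality, which produces a factor $e^{CT}$; that is \emph{not} absorbed into $C(1+T)$. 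The fix is to dispense with Gronwall entirely: since $b\geq 2\beta^2>0$, the energy norm controls the $L^2$ norm from below, $2\beta^2\norm{\xi_u^{m,\theta}}^2\leq\enorm{\bm\xi^{m,\theta}}^2$, so every right-hand term of the form $C\delta\,\norm{\xi_u^{m,\theta}}$ (coming from the reaction part of $\Bln{\bm\eta^{m,\theta}}{\cdot}$, from $\dual{(\eta_u^m-\eta_u^{m-1})/\Delta t}{\xi_u^{m,\theta}}$, and from the truncation error $\rho^m$) can be absorbed directly into half of the coercive left-hand side, leaving $C\delta^2$ summed over $m$ and hence the linear factor $T$ after multiplication by $\Delta t$. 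With that modification, together with the point you already flag — that the parabolic solution decomposition must extend to $u_t$ (and $\partial_t^3 u$ must be bounded for $\theta=1/2$) so that Lemma \ref{lemma:GR} applies to the time-averaged projection error, and noting that $(\eta_u^m-\eta_u^{m-1})/\Delta t=\frac{1}{\Delta t}\int_{t^{m-1}}^{t^m}(I-\Pi^-)u_t\,dt$ exactly rather than only up to $\mathcal{O}(\Delta t)$ — your argument yields precisely \eqref{err:estimate:LDGCN}.
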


%

\end{document}